\numberwithin{equation}{section}
\newif\ifdraft\drafttrue
\long\def\comliorryan#1{\ifdraft{\marginpar{\sn
#1 \ LF \& RB}}\else\ignorespaces\fi}
\newcommand{\bq}{{\mathbb{Q}}}
\newcommand{\bn}{{\mathbb{N}}}
\newcommand{\br}{{\mathbb{R}}}
\newcommand{\bz}{{\mathbb{Z}}}
\newcommand{\ct}{\mathcal T}
\font\sb = cmbx8 scaled \magstep0
\font\sn = cmssi8 scaled \magstep0
\long\def\comdima#1{\ifdraft{\marginpar{\sb
#1 \ DK}}\else\ignorespaces\fi}
\newcommand\ba{badly approximable}
\newcommand\da{Diophantine approximation}
\newcommand\di{Diophantine}
\newcommand\ssm{\smallsetminus}
\newcommand\eq[2]{{\ifdraft{\ \tt [#1]}\else\ignorespaces\fi}\begin{equation}\label{eq:#1}{#2}\end{equation}}
\newcommand {\equ}[1]     {\eqref{eq:#1}}
\newcommand{\under}[2]{\underset{\text{#1}}{#2}}
\newcommand{\Q}{{\mathbb {Q}}}
\newcommand{\R}{{\mathbb{R}}}
\newcommand{\T}{{\mathbb{T}}}
\newcommand{\Z}{{\mathbb{Z}}}
\newcommand{\N}{{\mathbb{N}}}
\newcommand{\supp}{\operatorname{supp}}
\newcommand {\ignore}[1]  {}
\newcommand{\df}{{\, \stackrel{\mathrm{def}}{=}\, }}
\newcommand{\vre}{\varepsilon}
\newcommand\hd{Hausdorff dimension}
\newcommand{\cy}{{\mathcal Y}}
\newcommand{\cf}{{\mathcal F}}
\newtheorem{thm}{Theorem}[section]
\newtheorem{lem}[thm]{Lemma}
\newtheorem{prop}[thm]{Proposition}
\newtheorem{cor}[thm]{Corollary}
\newtheorem{defn}[thm]{Definition}
\title[Schmidt's game, fractals, and  numbers normal to no base]{Schmidt's game, fractals, and \\ numbers normal to no base}
\author[Broderick, Bugeaud, Fishman, Kleinbock and Weiss]{Ryan Broderick, Yann Bugeaud, Lior Fishman, \\ Dmitry Kleinbock and Barak Weiss\\
} 
\address{Brandeis University, Waltham MA 02454-9110, USA, 
\newline
 {\tt ryanb@brandeis.edu}, {\tt lfishman@brandeis.edu}, {\tt kleinboc@brandeis.edu}}
\address{Ben Gurion University, Be'er Sheva, Israel 84105 {\tt barakw@math.bgu.ac.il}}
\address{Universit\'e de Strasbourg, 67084 Strasbourg, France {\tt bugeaud@math.u-strasbg.fr}}
\date{January 2010}
\begin{document}

 \begin{abstract} 
Given $b > 1$ and  $y \in  \R/\Z$,
we consider the set of $x\in \R$
such that $y$ is not a limit point of the sequence $\{b^n x\,\bmod 1: n\in\N\}$.
Such sets are known to have full \hd, and in many cases have been shown to
have a stronger property of being winning in the sense of Schmidt.
In this paper, by utilizing Schmidt games,  we prove that  these sets  
 and their bi-Lipschitz images must intersect
with `sufficiently regular'  fractals $K\subset \R$  (that is, supporting measures  $\mu$
satisfying certain decay conditions).
Furthermore, the intersection has full dimension in $K$ if $\mu$ satisfies a power law (this holds for
example if $K$ is the middle third Cantor set).  
Thus it follows that the 
set of numbers in the middle third Cantor set which are normal to no 
base has dimension $\log2/\log3$.

\end{abstract}
\maketitle

\section{Introduction} \label{intro} 
Let $b \ge 2$ be an integer.
A real number $x$ is said to be {\sl normal\/} to base $b$ if,
for every $n\in\bn$, every block of $n$ digits from 
$\{0, 1, \ldots , b-1\}$ occurs in the base-$b$ expansion
of $x$ with 
asymptotic frequency $1/b^n$. 
Equivalently, let $f_{b}$ be 
the self-map of $\T\df \R/\Z$ given by $x\mapsto bx$, and denote by $\pi: {x} \to x \,\bmod 1
$
the natural projection $\R\to\T$. Then $x$ is normal to base $b$  iff for any interval $I\subset \T$ with $b$-ary rational endpoints one has
$$\lim_{n\to\infty}\tfrac1n\#\big\{0 \le k \le n-1 : f^k_b\big( \pi({x})\big)\in I\big\} = \lambda(I)\,,$$ 
where $\lambda$ stands for Lebesgue measure.
\'E.\ Borel 
established that $\lambda$-almost all numbers  are normal to every integer base;
clearly this is also a consequence of Birkhoff's Ergodic Theorem and the ergodicity of 
$(\T,\lambda, f_{b})$.

Note that it is easy to exhibit many non-normal numbers in a given base $b$.
For example, denote by $E_b$ the set of  real numbers with a uniform upper bound on the 
number of consecutive zeroes  in their base-$b$ expansion.
Clearly those are not normal, and it is not hard to show that the  \hd\ of $E_b$ is equal to $1$. 
Furthermore, it was shown by W.\ Schmidt \cite{S1} that for any $b$ and any $0 < \alpha < 1/2$, the set $E_b$  is an 
 {\sl $\alpha$-winning set\/}
 of a game which later became known as Schmidt's game. 
 This property implies full \hd\ but is considerably stronger; for example, an intersection of  countably many $\alpha$-winning sets is also  $\alpha$-winning (we describe the definition and features of  Schmidt's game
in \S  \ref{Schmidt}). 
Thus it follows that  the set of real numbers  $x$ such that for each $b\in\Z_{\ge 2}$
their base-$b$ expansion does not contain more than $C = C(x,b)$ consecutive 
zeroes
has full Hausdorff dimension. Obviously, such numbers are normal to
no base. 

Now fix $y \in \T$ and a map $f:\T\to \T$, 
and,
following notation introduced in \cite{K}, 
consider
\eq{defefu}{
 E(f,y)\df \big\{ x\in\T: y \notin \overline{\{f^n(x) : n\in \N\}}\big\} \,,
}
the set of points with $f$-orbits staying away from $y$.
For brevity we will write 
\eq{defebu}{
 E(b,y) = \big\{ x\in\T: y \notin \overline{\{f_b^n(x) : n\in \N\}}\big\} \,.
} 
for $E(f_b,y)$. 
Obviously $E(b,0)$ is a subset of $\pi(E_b)$
for any $b$. It is known that  $\dim\big(E(b,y)\big) = 1$
for any $b$ and any $y\in \T$,
see e.g.\ \cite{Urbanski, Do}. Moreover, these sets\footnote{The results of \cite{Urbanski, Do, Ts},
are more general, with $f_b$ replaced by an arbitrary sufficiently smooth expanding self-map of $\T$.}
have been recently proved by J.\ Tseng \cite{Ts} to be $\alpha$-winning, where $\alpha$ is independent of $y$ but (quite badly) depends on $b$. 
In particular, it follows that for any {\it bounded\/} sequence $b_1,b_2,\ldots\in\bz_{\ge 2}$ and any $y_1,y_2,\ldots\in \T$, one has
\eq{finite}{
\dim\left(\bigcap_{k = 1}^\infty E({b_k},y_k)\right) = 1\,.}
Another related result is that of S.G.\ Dani \cite{D}, who proved that for any $y\in \bq/\bz$ and any $b\in\bz_{\ge 2}$, 
the sets $E(b,y)$ are $\frac12$-winning (in fact, his set-up is more general and involves semisimple
endomorphisms of the $d$-dimensional torus). Consequently, 
\equ{finite} holds with no upper bound on $b_k$
as long as points $y_k$ are chosen to be
rational (that is, pre-periodic for maps $f_{b}$).

The main purposes 
of the present note are to extend \equ{finite} by 
removing an upper bound\footnote{After this paper was finished we learned of
an alternative approach \cite{Fae, FPS} showing that sets
$E(b,y)$ are $\frac14$-winning for any $y\in\T$ and any $b\in\bz_{\ge 2}$; also, 
 in a sequel \cite{BFK} to this paper it is explained that $\frac14$ can be replaced by $\frac12$.}
 on $b_k$,  
and to consider 
intersections with certain fractal subsets of $\T$ such as e.g.\
 the middle third Cantor set. 
 In fact it will be convenient to lift the problem from $\T$ to $\R$ and 
 work with $ \pi^{-1}\big(E(b,y)\big)$;
in other words, 
consider
\eq{defebut} {
\tilde E(b,y)\df \big\{ x\in\R: y \notin \overline{\{\pi( b^nx) : n\in \N\}}\big\} 
} 
Clearly this set 
is periodic (with period $1$); however we are going to study its intersections
with (not necessarily periodic)  subsets $K\subset \R$, for example, with their bi-Lipschitz images.
Another  advantage of switching from \equ{defebu} to \equ{defebut} is that  the latter makes
sense even
 when 
 $b > 1$ is not an integer\footnote{To make sense of \equ{defebu} when $b\notin \Z$ some efforts
 are required, see \S \ref{nonlinear}.}. 
This set-up has been extensively studied; 
for example A.\ Pollington proved in  \cite{P1} that the intersection $\bigcap_{k = 1}^\infty\tilde E(b_k,y_k)$
has \hd\ at least $1/2$ for any choices of $y_k\in\T$ and $b_k > 1$, $k\in\N$.
More generally,  there are similar results with $(b^n)$ in \equ{defebut} 
replaced by an arbitrary {\sl lacunary\/} sequence $\ct =  (t_n)$ of positive real numbers (recall that $\ct$ is called lacunary if
$\inf_{n\in \N}\frac{t_{n+1}}{t_n} > 1$). Namely,  generalizing \equ{defebut}, fix $\ct$ as above and
a sequence $\cy = (y_n)$ of points in $\T$, and
define 
$$\tilde E(\ct,\cy)\df \big\{ x\in\R: \inf_{n\in \N} d\big(\pi(t_nx) , y_n\big) > 0\big\} \,.$$
Here and hereafter $d$ stands for the usual distance on $\T$ or $\R$.
We will write $\tilde E(\ct,y)$ when $\cy = (y)$ is a constant sequence, that is,
$$\tilde E(\ct,y)\df \big\{ x\in\R: y \notin \overline{\{\pi(t_nx) : n\in \N\}}\big\} \,.$$
 It is a result of Pollington \cite{P2} and B.\ de Mathan \cite{Ma} that
the sets $\tilde E(\ct,0)$ have \hd\ $1$ for any lacunary sequence $\ct$; 
see also \cite[Theorem 3]{BHKV} for a multi-dimensional generalization. 
Moreover, one can show, as mentioned by N.\ Moshchevitin in \cite{M1},
 that those sets are  $\frac12$-winning.  

Our main theorem 
extends the aforementioned results 
in several directions. 
\ignore{We are able to compose the maps $x\mapsto t_nx$ with translations,
that is, consider a sequence of  similarities $\cf = (F_n)$ of $\R$, where $F_n = t_n x + c_n$.
With some abuse of notation, let us say that $\cf$ is lacunary if so is $\ct$. 
We will refer to $t_n$ as the {\sl scaling constant\/} of $F_n$. Generalizing \equ{defetut},
for a sequence $\cf = (F_n)$ of self-maps of $\R$ we are going to look at
\eq{defefyt}{
\tilde E(\cf,y)\df \big\{ x\in\R: y \notin \overline{\{\pi\big(F_n(x)\big) : n\in \N\}}\big\} \,.
}
Also, }
We will  allow arbitrary sequences $\cy$, and will study  intersection  of sets $\tilde E(\ct,\cy)$ with certain
fractals $K\subset \R$.
 Namely, if $K$ is a closed subset of the real line, following \cite{F}, we 
 will play Schmidt's game on the metric space $K$ with the   induced metric. We will say that a subset $S$ of $\R$
 is {\sl $\alpha$-winning on $K$\/}  if $S\cap K$ is an  $\alpha$-winning set for the game played on $K$. 
 See \S  \ref{Schmidt} for more detail. Further, in \S \ref{measures} we define
and discuss so-called {\sl $(C,\gamma)$-absolutely decaying\/} 
measures -- a 
notion introduced in \cite{KLW}.  
Here is our main result:

\begin{thm}
\label{theorem}
Let $K$ be the support of a $(C,\gamma)$-absolutely decaying measure on $\R$,
and let \eq{alpha}{\alpha \le \frac14\left( \frac{1}{3C} \right)^{\frac{1}{\gamma}}.}
Then for every bi-Lipschitz map $\varphi: \R \to \R$, any sequence $\cy$ of points  in $\T$,
and any lacunary sequence  $\ct$, the set  
$\varphi\big(\tilde E(\ct,\cy)\big)$ is $\alpha$-winning on $K$.
\end{thm}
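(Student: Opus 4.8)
The plan is to describe, for an arbitrary $\beta\in(0,1)$, a winning strategy for Alice in the $(\alpha,\beta)$-game played on $K$ (and, with a little more care, to see that her strategy shows $\varphi(\tilde E(\ct,\cy))\cap K$ is $\alpha$-winning in Schmidt's sense). Write $\lambda:=\alpha\beta$, so Bob's $j$-th ball $B_j=B(c_j,\rho_j)\cap K$ with $c_j\in K$ has radius $\rho_j=\lambda^j\rho_0$, and, $K$ being closed, the outcome $x_\infty=\bigcap_jB_j$ is a single point of $K$. The first step is to eliminate $\varphi$: let $L\ge1$ be a bi-Lipschitz constant, set $Z_n:=t_n^{-1}(y_n+\Z)$ (an arithmetic progression of gap $t_n^{-1}$), and note that since $\varphi$ is an increasing homeomorphism $\varphi(Z_n)$ is discrete with all consecutive gaps in $[L^{-1}t_n^{-1},Lt_n^{-1}]$; in particular a ball of radius $<(2Lt_n)^{-1}$ meets $\varphi(Z_n)$ in at most one point. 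Since $K=\supp\mu$ is unaffected by $\varphi$, it suffices to make $x_\infty$ satisfy $d(x_\infty,\varphi(Z_n))\ge L\delta/t_n$ for all $n$, where $\delta>0$ is a constant to be fixed along the way: then $d(\varphi^{-1}(x_\infty),Z_n)\ge\delta/t_n$, so $d(\pi(t_n\varphi^{-1}(x_\infty)),y_n)=t_n\,d(\varphi^{-1}(x_\infty),Z_n)\ge\delta$ for every $n$, i.e. $\varphi^{-1}(x_\infty)\in\tilde E(\ct,\cy)$ and hence $x_\infty\in\varphi(\tilde E(\ct,\cy))\cap K$.

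The second step is the local lemma that pins down the admissible range of $\alpha$. For $r>0$ and $Z\subset\R$ let $\mathcal N(Z,r)$ denote the $r$-neighbourhood of $Z$. \emph{Claim:} if $B(c,\rho)\cap K$ is a ball in $K$, $z\in\R$ and $s\le\alpha\rho$, then there is $a\in K$ with $B(a,\alpha\rho)\cap K\subseteq B(c,\rho)\cap K$ and $B(a,\alpha\rho)\cap B(z,s)=\varnothing$ — that is, Alice can always move her ball off of any obstacle interval $B(z,s)$ of radius $\le\alpha\rho$. Indeed, a point $a\in K$ is a legitimate centre unless $d(a,c)>(1-\alpha)\rho$, so that $a$ lies in the annulus $B(c,\rho)\setminus B(c,(1-\alpha)\rho)$ (covered by two balls of radius $\tfrac\alpha2\rho$), or $d(a,z)\le\alpha\rho+s$, so that $a\in B(z,2\alpha\rho)$. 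Applying $(C,\gamma)$-absolute decay of $\mu$ to the ball $B(c,\rho)$ (whose centre lies in $\supp\mu$) bounds the $\mu$-measure of the union of these sets by
$$\bigl(2C(\tfrac\alpha2)^\gamma+C(2\alpha)^\gamma\bigr)\mu(B(c,\rho))=C\alpha^\gamma\bigl(2^{1-\gamma}+2^\gamma\bigr)\mu(B(c,\rho))\le 3C\alpha^\gamma\,\mu(B(c,\rho))$$
(using $\gamma\le1$, whence $2^{1-\gamma}+2^\gamma\le3$), and the hypothesis $\alpha\le\tfrac14(3C)^{-1/\gamma}$ makes this $\le 4^{-\gamma}\mu(B(c,\rho))<\mu(B(c,\rho))$, so such an $a\in\supp\mu$ exists. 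Keeping track of the gap between $4^{-\gamma}$ and $1$ in fact produces a fixed positive proportion of good centres, which is what upgrades "Alice has a winning strategy" to "the set is $\alpha$-winning".

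The third step, where I expect the real work to be, is the scheduling. With $\delta$ held fixed, Alice's plan is that for every $n$ there is a round $j(n)$ at which she invokes the Claim with $z$ the unique point of $\varphi(Z_n)$ inside $B_{j(n)}$ and $s=L\delta/t_n$, so that $B_{j(n)}\cap\mathcal N(\varphi(Z_n),L\delta/t_n)=B_{j(n)}\cap B(z,s)$; then $A_{j(n)}$, every later ball, and therefore $x_\infty$ avoid $\mathcal N(\varphi(Z_n),L\delta/t_n)$ — which is exactly the separation needed in Step 1. For this to be legal at round $j(n)$ one needs $\rho_{j(n)}<(2Lt_n)^{-1}$ and $L\delta/t_n\le\alpha\rho_{j(n)}$, i.e. $\rho_{j(n)}\in\bigl[\,L\delta(\alpha t_n)^{-1},(2Lt_n)^{-1}\bigr)$; the finitely many $n$ with $t_n<(2L\rho_0)^{-1}$ are dealt with in the opening rounds. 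Since the $\rho_j$ contract by the fixed factor $\lambda$ each round while, by lacunarity, the scales $t_n^{-1}$ contract at least geometrically (by $R^{-1}$), when $\lambda\ge R^{-1}$ the windows above admit a greedy injective assignment $n\mapsto j(n)$ once $\delta$ is small. The genuine difficulty is that Bob may take $\beta$, hence $\lambda$, very small, so that many scales $t_n^{-1}$ fall between consecutive rounds and several indices must be handled in a single round; the point — and the reason the range of $\alpha$ depends only on $C$ and $\gamma$ — is that, choosing $\delta$ much smaller than $\lambda$ and exploiting that each $Z_n$ is an arithmetic progression, the combined obstacle confronting Alice at any one round can still be evaded by a single move of radius $\alpha\rho_j$ (the coarser obstacles being thin on the scale of the finer ones, and the constant in Step 2 being calibrated precisely for this). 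Making this scheduling work uniformly in $\beta$ is the crux of the proof. Once it is carried out, $\delta>0$ is a fixed positive constant and $x_\infty\in\varphi(\tilde E(\ct,\cy))\cap K$; as $\beta\in(0,1)$ was arbitrary, $\varphi(\tilde E(\ct,\cy))$ is $\alpha$-winning on $K$.
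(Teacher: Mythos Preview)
Your reduction via $\varphi$ and the target condition $d\big(\pi(t_n\varphi^{-1}(x_\infty)),y_n\big)\ge\delta$ are set up correctly, and your Step~2 Claim is essentially the $N=1$ case of the avoidance lemma the paper proves (Lemma~\ref{log turns}). But Step~3, which you yourself flag as the crux, has a genuine gap, and your Step~2 is not strong enough to close it.

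The difficulty you identify---that for small $\beta$ many indices $n$ land between two consecutive rounds---cannot be resolved by a single one-point avoidance per move. Your assertion that ``the combined obstacle confronting Alice at any one round can still be evaded by a single move'' is not correct: the relevant points of $\varphi(Z_n)\cap B_j$ for the various $n$ assigned to round $j$ are in general distinct and spread out, and there is no reason one Alice-move of radius $\alpha\rho_j$ avoids all of them. The paper's device is a stronger version of your Claim: given any $N$ points $y_1,\dots,y_N$, Alice can in a single move place her ball at distance $>\alpha\rho$ from at least \emph{half} of them (either the current centre already works, or all the bad points cluster in $B(x_1,2\alpha\rho)$ and one jumps outside $B(x_1,4\alpha\rho)$). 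Iterating this $r=\lfloor\log_2 N\rfloor+1$ times clears all $N$ obstacles. The scheduling then reads: choose $N$ so large that $(\alpha\beta)^{-r}\le M^N$ (possible since $r/N\to0$), partition the indices into blocks $I_k=\{n:(\alpha\beta)^{-r(k-1)}\le t_n<(\alpha\beta)^{-rk}\}$, each of size $\le N$ by lacunarity, and devote the $r$ consecutive moves $r(k+1),\dots,r(k+2)-1$ to block $I_k$. At those rounds each $\varphi(Z_n)$ with $n\in I_k$ meets Bob's ball in at most one point, so there are $\le N$ obstacles, and $r$ halving moves suffice. This halving-plus-block-scheduling is the missing idea.

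Two smaller points. Your remark that a ``fixed positive proportion of good centres'' is what upgrades the conclusion to $\alpha$-winning is a confusion: $\alpha$-winning simply means Alice has a winning strategy in the $(\alpha,\beta)$-game for every $\beta\in(0,1)$, which is exactly what the argument (once completed) gives. And $\varphi$ need not be increasing; this does not affect the proof but the sentence as written is incorrect.
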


We  also show in \S\ref{Schmidt} 
that when $K$ is 
as in the above theorem and $S$ is
winning on $K$, one has $\dim(S\cap K) \ge \gamma$. Furthermore,  $\dim(S\cap K)  =  \dim( K) 
$ if $\mu$ satisfies a power law. 
Consequently, in view of the countable intersection property of winning sets, 
for any choice of lacunary sequences $\ct_k$, sequences $\cy_k$ of points in $\T$, and
bi-Lipschitz maps $\varphi_k: \R\to \R$,
one has
\eq{infinite}{
\dim\left(K \cap \bigcap_{k = 1}^\infty \varphi_k\big( \tilde E(\ct_k, \cy_k)\big)\right) \ge \gamma\,,}
where $\gamma$ is as in Theorem \ref{theorem} (see Corollary \ref{cor}). 
Thus  on any $K$ as above it is possible to find a set of positive \hd\ consisting of numbers
which are normal to no base. 


Another consequence of the generality of Theorem \ref{theorem} is a possibility to consider orbits of affine 
expanding  maps of the circle, that is,  \eq{deffbc}{f_{b,c}: x\mapsto  bx + c\,,\text{  where }b\in\Z_{\ge 2}\text{  and }c\in \T\,.}
It then follows that whenever 
$K$, $\alpha$ and $\varphi$ are 
as in Theorem \ref{theorem} and $y\in\T$, 
 the set $\varphi\left(\pi^{-1}\big( E(f_{b,c}, y)\big)\right)$ is $\alpha$-winning on $ K$ (see Corollary \ref{affine}).
 In particular, 
 $E(f_{b,c}, y)$ itself   is  $\alpha$-winning on any subset of $\T$ supporting a measure
 which can be lifted to a $(C,\gamma)$-absolutely decaying measure on $\R$.

Also, as is essentially proved in \cite{F}, a bi-Lipschitz 
image of the set 
$$\bold{BA} \df \left\{{x}\in\br : \exists \,c = c({x}) >0\text{ s.\ t.\ }\left|{x} - \frac{p}{q}\right| > \frac{c}{q^2}\ \ 
\forall(p,q) \in \mathbb{Z}\times \N\right\}$$
 of {\sl badly approximable\/} numbers is also $\alpha$-winning on $K$ under
the same assumptions on $K$ 
(see also \cite{KW1, KTV}). We discuss this in \S\ref{proof} (see Theorem \ref{ba}). 
Thus the intersection of the set in the left hand side of \equ{infinite} with 
$\varphi(\bold{BA})$, where $\varphi: \R \to \R$ is bi-Lipschitz, will still have \hd\ at least $\gamma$. 
This significantly generalizes V.\ Jarn\'ik's \cite{J} result on the full \hd\ of $\bold{BA}$, as well
as its strengthening by Schmidt \cite{S1}.
Note that $\bold{BA}$ is a nonlinear analogue of $\tilde E(b,0)$, with $f_{b}$ replaced by the Gauss map; 
this
naturally raises a question of extending our results to more general self-maps of $\T$, see \S\ref{nonlinear}.


As a straightforward consequence of our results, we get

\begin{cor} 
Given $K \subset \R$ supporting an absolutely decaying measure $\mu$,
the set of real numbers $x\in K$ that are badly approximable and 
such that, for every $b \ge 2$, their base-$b$ expansion
does not contain more than $C(x, b)$ consecutive identical digits, 
has positive Hausdorff dimension. In particular,
if $\mu$ satisfies a power law (for example, if $K$ is the middle third Cantor set), 
then the dimension
of this set is full.
\end{cor}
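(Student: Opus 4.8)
The plan is to realize the set in the corollary as a countable intersection of sets of the form $\tilde E(b,y)$ and $\bold{BA}$ --- which Theorem~\ref{theorem} and Theorem~\ref{ba} show to be $\alpha$-winning on $K$ --- and then to read off its Hausdorff dimension from the properties of winning subsets of $K$ established in \S\ref{Schmidt}. The first step is to rephrase the digit condition dynamically. Fix an integer $b\ge 2$ and a digit $d\in\{0,1,\dots,b-1\}$, and let $y_{b,d}\df\pi\big(\tfrac{d}{b-1}\big)\in\T$ be the image of the point whose base-$b$ expansion is $0.\overline{d}$ (so that $y_{b,0}=y_{b,b-1}=0$). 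If the base-$b$ expansion $0.a_1a_2\cdots$ of $\{x\}$ contains a block $a_{n+1}=\cdots=a_{n+m}=d$, then $\pi(b^nx)=0.a_{n+1}a_{n+2}\cdots$ lies within $b^{-m}$ of $y_{b,d}$; hence if this expansion contains arbitrarily long blocks of the digit $d$, then $y_{b,d}$ is a limit point of $\{\pi(b^nx):n\in\N\}$, so $x\notin\tilde E(b,y_{b,d})$. Consequently
$$W\df\bold{BA}\cap\bigcap_{b=2}^{\infty}\ \bigcap_{d=0}^{b-1}\tilde E(b,y_{b,d})$$
is contained in the set described in the corollary: every $x\in W$ is badly approximable, and for each $b\ge 2$ the lengths of the blocks of identical digits in the base-$b$ expansion of $\{x\}$ are bounded by some finite $C(x,b)$. (A short converse, using that any point within $b^{-m}$ of $0.\overline{d}$ has a block of $d$ of length at least $m-1$, shows the two sets coincide, but only this inclusion is needed.)

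Next I would check that $W$ is $\alpha$-winning on $K$ for one fixed $\alpha$ satisfying~\equ{alpha}. For each integer $b\ge 2$ the sequence $(b^n)_{n\in\N}$ is lacunary, so Theorem~\ref{theorem}, applied with $\varphi=\mathrm{id}$, with $\ct=(b^n)$ and with the constant sequence $\cy=(y_{b,d})$, shows that $\tilde E(b,y_{b,d})$ is $\alpha$-winning on $K$; the crucial point is that the admissible $\alpha$ in~\equ{alpha} depends only on the parameters $C,\gamma$ of the absolutely decaying measure $\mu$, and not on $b$, on $d$, or on the choice of lacunary sequence. Likewise, by Theorem~\ref{ba} with $\varphi=\mathrm{id}$, the set $\bold{BA}$ is $\alpha$-winning on $K$ for the same $\alpha$. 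Since a countable intersection of $\alpha$-winning sets on $K$ is again $\alpha$-winning on $K$ (a standard feature of Schmidt's game, see \S\ref{Schmidt}), it follows that $W$ is $\alpha$-winning on $K$.

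Finally I would invoke the two statements about winning subsets of $K$ proved in \S\ref{Schmidt}: if $S$ is $\alpha$-winning on $K$ then $\dim(S\cap K)\ge\gamma$, and moreover $\dim(S\cap K)=\dim K$ whenever $\mu$ satisfies a power law. Since the set in the corollary contains $W\cap K$, its Hausdorff dimension is at least $\gamma>0$; and when $\mu$ satisfies a power law this dimension is at least $\dim K$, while, being that of a subset of $K$, it is at most $\dim K$, so it equals $\dim K$. In particular, when $K$ is the middle third Cantor set equipped with its natural measure, which satisfies a power law, the dimension in question is $\dim K=\log 2/\log 3$.

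The only genuinely delicate point is the bookkeeping of the first paragraph: the exact relation between ``all blocks of identical base-$b$ digits are bounded'' and membership in the sets $\tilde E(b,y_{b,d})$. One must account for the finitely many initial digits of an expansion, for the two base-$b$ expansions of $b$-ary rationals, and for the fact that a block of $(b-1)$'s, exactly like a block of $0$'s, pushes $\pi(b^nx)$ toward $0=y_{b,0}=y_{b,b-1}$. None of this affects the conclusion: the $b$-ary rationals form a countable set, hence one of Hausdorff dimension $0$, and their terminating expansions end in an infinite block of $0$'s, so they automatically fail to lie in $\tilde E(b,0)$. Beyond this, the argument is a direct assembly of Theorem~\ref{theorem}, Theorem~\ref{ba}, the countable intersection property of winning sets, and the dimension estimates for winning subsets of $K$.
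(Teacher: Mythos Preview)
Your proposal is correct and is exactly the argument the paper has in mind: the corollary is stated there as ``a straightforward consequence of our results,'' with no separate proof, and the intended derivation is precisely to take the countable intersection of $\bold{BA}$ with the sets $\tilde E(b,y)$ for the rational points $y=\tfrac{d}{b-1}$, invoke Theorems~\ref{theorem} and~\ref{ba} together with the countable intersection property, and then read off the dimension via Proposition~\ref{dimbound} (this is the content of Corollary~\ref{cor} specialized to $\varphi_k=\psi_k=\mathrm{id}$). Your translation of the digit condition into avoidance of the periodic points $y_{b,d}$, and your handling of the minor bookkeeping issues, are all in order.
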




The structure of the paper is as follows.
In \S\ref{measures} we describe the class of absolutely decaying measures on $\R$, giving
examples and highlighting the connections between absolute decay and other
properties.  In \S\ref{Schmidt} 
we discuss Schmidt's game
played on arbitrary metric spaces $X$, and then specialize to the case when $X = K$ is a 
subset of $\R$ supporting an absolutely decaying measure.
Then in \S\ref{proof} we prove the main theorem.
The last section
is devoted to some extensions of our main result 
and further open questions.

\smallskip

{\bf Acknowledegments:} Yann Bugeaud would like to thank
 Ben Gurion University at Beer-Sheva, where part of this 
work has been done. This research 
was supported in part by  NSF
grant DMS-0801064, ISF grant 584/04 and  BSF grant  2004149.


\section{Absolutely decaying measures }
\label{measures}
\ignore{
\comdima{I am not sure if we should keep this intro, wrote just in case.}
Let us start with a general 
vague question. 
Suppose $\mathcal{P}$ is a certain
number-theoretic property of real numbers or vectors in $\br^n$ which holds on a sufficiently big set
(for example, on a set of full measure or on a dense set of full \hd). Given a rather small
subset $K$ of $\br^n$, when can one
guarantee that it contains at least one, or even quite a few, points with property $\mathcal{P}$?
A possible answer to this question turns out to depend on the existence of a nice measure supported
on $K$. It has been a recurring theme in metric \da\ over recent years to show that certain \di\ properties
hold for $\mu$-almost all points, or for sufficiently big subset of $\supp\mu$, provided $\mu$ satisfies
certain decay conditions.
One example is provided by the theory of \da\ on manifolds, where it is shown that certain 
Lebesgue-generic \di\ conditions happen to be generic with respect to volume measures on
smooth nondegenerate manifolds; see \cite{BD, KM} and references therein for details and history. 

Another example 
is the middle third Cantor  set $\bold C\subset \br$. It was first proved in \cite{KW1} and, independently, in \cite{KTV}, then reproved in \cite{F} using a simpler argument, that
the intersection of $\bold{C}$ with the set 
of badly approximable numbers  has full \hd\ in $\bold{C}$; that is, $\dim(\bold{C} \cap \bold{BA}) = \log 2/\log 3$. In all three aforementioned proofs 
the crucial role was played by the natural coin-flipping measure supported on $\bold C$, whose decay properties  had been also exploited earlier in \cite{Veech} and \cite{W}.}

The next definition  
describes a property of measures first introduced in \cite{KLW}. 
In this paper
we only consider measures on the real line; however see \S\ref{matrices} for a situation in higher dimensions. In what follows, we denote
by $B(x,\rho)$ the closed ball in a metric space $(X,d)$ centered at $x$ of radius $\rho$,
\eq{ballsdef}{   
B(x,\rho)
 \df \{ y \in X : d(x,y) \le \rho\}\,.} 
\begin{defn}
\label{decay}
Let $\mu$ be a locally finite
Borel measure on $\br$, and let $C, \gamma > 0$. 
We say that $\mu$ is
{\sl $(C,\gamma)$-absolutely decaying\/} if there exists $\rho_{0} >0$ such that for all 
$0< \rho\leq \rho_{0}$,  $x\in  \supp\mu$, $y\in \R$ and $\vre >0$,
\eq{axiom}{
\mu\big(B(x,\rho)\cap B(y,\vre \rho)\big) < C\vre^{\gamma}\mu\big(B(x,\rho)\big)\,.
}
We say $\mu$ is {\sl absolutely decaying\/} if it 
is $(C,\gamma)$-absolutely decaying for some positive $C, \gamma$.
\end{defn}

Many examples of measures satisfying this property are constructed\footnote{The terminology in \cite{KLW} is slightly different; 
there, 
$\mu$ is called absolutely decaying if  $\mu$-almost every point has a neighborhood $U$
such that the restriction of $\mu$ to $U$ is $(C,\gamma)$-absolutely decaying for some $C,\gamma$;
however in all  examples considered  in \cite{KLW, KW1} a stronger uniform property is in fact established.} in \cite{KLW, KW1}. 
For example, limit measures of finite 
systems of contracting similarities \cite[\S8]{KLW}
satisfying the
open set condition and without a global fixed point are absolutely decaying. 
See also \cite{U2, U3, U4, SU} for other examples. 

\smallskip

In what follows we highlight the connections between absolute decay and other conditions
 introduced earlier in the literature. 

 \begin{defn}
\label{decay-old}
Let $\mu$ be a locally finite
Borel measure on a metric space $X$. 
One says that $\mu$ is 
{\sl Federer\/} (resp., {\sl efd\/}) if there exists $\rho_{0} >0$ and $0 < \vre,\delta < 1$ such that 
for every $0<\rho\leq \rho_0$
and for any $x\in \supp \mu$,  the ratio \eq{ratio}{\mu\big(B(x,\vre\rho)\big) /  \mu\big(B(x ,\rho)\big)}
is at least (resp., at most) $\delta$.
\ignore{eq{leftside}{
\mu\big(B(x,a\rho)\big)  > \delta  \mu\big(B(x ,\rho)\big)\,.}
We say that $\mu$ is 
if there exists $\rho_{0} >0$ and $0 < a,\delta < 1$ such that 
for every $0<\rho\leq \rho_0$
and for any $x\in \supp \mu$,  one has
\eq{rightside}{
\mu\big(B(x,a\rho)\big)  < \delta  \mu\big(B(x ,\rho)\big)\,.}}
\end{defn}
 
 Federer property is usually referred to as  `doubling': see e.g.\ \cite{MU} for discussions and examples.
The term `efd'  (an abbreviation for exponentially fast decay) was introduced by Urbanski; see \cite{U2, U4} for many examples
 and \cite{Veech,W} for other equivalent formulations. The next lemma provides another way to state these properties:
 
\begin{lem} 
\label{equiv} 
Let $\mu$ be a locally finite
Borel measure on a metric space $X$. Then $\mu$ is {\sl Federer\/} (resp., {\sl efd\/}) if and only if
 there exist $\rho_{0} >0$ and $c,\gamma > 0$ such that 
for every $0<\rho\leq \rho_0$, $0 < \vre < 1$, 
and $x\in \supp \mu$,  the ratio \equ{ratio}
is not less (resp., not greater) than $c\vre^{\gamma}$.
\end{lem}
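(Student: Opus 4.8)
The plan is to prove the two equivalences in parallel, since the arguments for \emph{Federer} and \emph{efd} differ only by the direction of the inequalities. One implication is essentially free: if the ratio \equ{ratio} is bounded below by $c\vre^{\gamma}$ (resp.\ above by $c\vre^{\gamma}$) for every $\vre\in(0,1)$, then picking a single $\vre_*\in(0,1)$ small enough that $c\vre_*^{\gamma}<1$ and putting $\delta\df c\vre_*^{\gamma}$ recovers Definition \ref{decay-old} for the pair $(\vre_*,\delta)$; one uses here that the ratio is automatically at most $1$, so the resulting $\delta$ indeed lies in $(0,1)$.

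For the converse, suppose $\mu$ is Federer, with $\rho_{0}>0$ and $\vre_0,\delta_0\in(0,1)$ as in Definition \ref{decay-old}. First I would iterate the defining inequality: since $\vre_0^{j}\rho\le\rho\le\rho_0$ for every $j\ge 0$, applying it $k$ times gives $\mu\big(B(x,\vre_0^{k}\rho)\big)\ge\delta_0^{k}\,\mu\big(B(x,\rho)\big)$ for all $k\ge 0$, $x\in\supp\mu$ and $0<\rho\le\rho_0$. Now, given an arbitrary $\vre\in(0,1)$, set $k\df\lfloor\log\vre/\log\vre_0\rfloor\ge 0$, so that $\vre_0^{k+1}<\vre\le\vre_0^{k}$; monotonicity of $\mu$ then yields $\mu\big(B(x,\vre\rho)\big)\ge\mu\big(B(x,\vre_0^{k+1}\rho)\big)\ge\delta_0^{k+1}\,\mu\big(B(x,\rho)\big)$. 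Finally, put $\gamma\df\log\delta_0/\log\vre_0>0$, so that $\vre_0^{\gamma}=\delta_0$ and hence $\delta_0^{k+1}=\delta_0\cdot\vre_0^{k\gamma}\ge\delta_0\,\vre^{\gamma}$; this is the required bound with $c=\delta_0$. The efd case is identical with all inequalities reversed: iteration gives $\mu\big(B(x,\vre_0^{k}\rho)\big)\le\delta_0^{k}\,\mu\big(B(x,\rho)\big)$, and for $\vre\in(0,1)$ and the same $k$ one uses $\mu\big(B(x,\vre\rho)\big)\le\mu\big(B(x,\vre_0^{k}\rho)\big)\le\delta_0^{k}\,\mu\big(B(x,\rho)\big)$ together with $\vre^{\gamma}>\vre_0^{(k+1)\gamma}=\delta_0^{k+1}$ to conclude with $c=\delta_0^{-1}$.

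There is no genuine obstacle here; the whole point is a routine ``boosting'' of a fixed-ratio estimate into a power law by iteration. The only places demanding care are bookkeeping: choosing the exponent $\gamma=\log\delta_0/\log\vre_0$ precisely so that the geometric rate $\delta_0^{k}$ matches the power $\vre_0^{k\gamma}$, and making sure that the one extra factor of $\delta_0$ (resp.\ $\delta_0^{-1}$) produced by the rounding in $\lfloor\cdot\rfloor$ is absorbed into the multiplicative constant $c$ rather than into the exponent. It is also worth recording that, thanks to this iteration, it suffices in the hypotheses to control the ratio \equ{ratio} for $\vre$ ranging over the fixed interval $[\vre_0,1)$, which is exactly what the Federer and efd conditions provide.
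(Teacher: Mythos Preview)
Your proof is correct and follows essentially the same route as the paper's: in both the ``if'' direction is dispatched by fixing a single small $\vre$, and the converse is obtained by iterating the defining inequality along the scales $\vre_0^k$, setting $\gamma=\log\delta_0/\log\vre_0$, and absorbing the one leftover factor of $\delta_0^{\pm1}$ from the floor into the constant $c$. The constants you obtain ($c=\delta_0$ for Federer, $c=\delta_0^{-1}$ for efd) match the paper's exactly.
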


\begin{proof}  The `if' part is clear, one simply needs to choose $\vre$ such that $c\vre^{\gamma} < 1$.
Now suppose $\mu$ is Federer, and  let $\vre_0,\delta$ be such that \eq{Federer}{\mu\big(B(x,\vre_0\rho)\big)  \ge \delta \mu\big(B(x ,\rho)\big)}
for every $0<\rho\leq \rho_0$
and  $x\in \supp \mu$. We are going to put $c =\delta$ and $\gamma= \frac{\log \delta}{\log \vre_0}$.
Take $0 < \vre < 1$, and let $n$ be the largest integer such that
$\vre \leq \vre_0^n$. Then
$$c\vre^{\gamma} = \delta \vre^{\frac{\log \delta}{\log \vre_0}}
= \delta \delta^{\frac{\log \vre}{\log \vre_0}} \leq \delta ^{n+1}\,.$$
Hence
$$c\vre^{\gamma}\mu\big(B(x,\rho)\big) \leq \delta^{n+1}\mu\big(B(x,\rho)\big) 
\underset{\text{\equ{Federer} applied $n$ times}}\leq \mu\big(B(x,\vre_0^{n+1}\rho)\big)
\,,$$
which, in view of the definition of $n$, implies
${\mu\big(B(x,\vre\rho)\big) \ge c\vre^{\gamma}\mu\big(B(x,\rho)\big)}$.
Similarly, from the fact that $\mu\big(B(x,\vre_0\rho)\big)  \le \delta \mu\big(B(x ,\rho)\big)$
for every $0<\rho\leq \rho_0$
and  $x\in \supp \mu$ one can deduce the inequality \eq{rightside}{\mu\big(B(x,\vre\rho)\big) \le c\vre^{\gamma}\mu\big(B(x,\rho)\big)}
for every $x,\rho$ and $\vre$, with $c =1/\delta$ and $\gamma= \frac{\log \delta}{\log \vre_0}$.
\end{proof}

Now we can produce an alternative description of absolutely decaying measures on $\R$:
\begin{prop} 
\label{equiv-af} Let $\mu$ be a locally finite
Borel measure on $\br$. Then $\mu$ is absolutely decaying if and only if it is  Federer and efd. \end{prop}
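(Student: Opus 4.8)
The plan is to prove the two implications separately, and in both of them to replace the hypotheses on $\mu$ by the quantitative forms supplied by Lemma \ref{equiv}: being Federer is equivalent to an inequality $\mu\big(B(x,\vre\rho)\big) \ge c\vre^{\gamma}\mu\big(B(x,\rho)\big)$, and being efd to an inequality $\mu\big(B(x,\vre\rho)\big) \le c\vre^{\gamma}\mu\big(B(x,\rho)\big)$, each holding for all $x\in\supp\mu$, all $0<\vre<1$, and all $\rho$ below a fixed threshold. It will also be used repeatedly that $\mu\big(B(x,\rho)\big)\in(0,\infty)$ for $x\in\supp\mu$ and $\rho$ small, so that all the ratios below make sense, and that a closed set missing $\supp\mu$ is $\mu$-null.

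Suppose first that $\mu$ is $(C,\gamma)$-absolutely decaying, with threshold $\rho_0$. The efd property is immediate: taking $y=x$ in \equ{axiom} with $0<\vre<1$ gives $B(x,\rho)\cap B(x,\vre\rho)=B(x,\vre\rho)$, so $\mu\big(B(x,\vre\rho)\big)<C\vre^{\gamma}\mu\big(B(x,\rho)\big)$, and choosing $\vre$ small enough that $C\vre^{\gamma}<1$ yields efd (with the same $\rho_0$). For the Federer property, fix $a\in(0,1)$ and set $\vre=(1-a)/2$. The point of working on $\R$ is that the two connected components of $B(x,\rho)\setminus B(x,a\rho)$ are closed intervals of length exactly $2\vre\rho$, hence each equals a ball $B(y_i,\vre\rho)$ for a suitable $y_i\in\R$. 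Applying \equ{axiom} to $y_1$ and $y_2$ bounds $\mu\big(B(x,\rho)\setminus B(x,a\rho)\big)$ by $2C\vre^{\gamma}\mu\big(B(x,\rho)\big)$, whence
$$\mu\big(B(x,a\rho)\big)\;>\;\Big(1-2C\big(\tfrac{1-a}{2}\big)^{\gamma}\Big)\,\mu\big(B(x,\rho)\big).$$
Choosing $a$ close enough to $1$ (depending on $C,\gamma$) makes the factor a positive number $\delta<1$, which is exactly the Federer property with parameters $a,\delta,\rho_0$.

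For the converse, assume $\mu$ is Federer and efd. Let $x\in\supp\mu$, $y\in\R$, $0<\vre<1$, and $\rho$ small; the cases $\vre\ge1$, $B(x,\rho)\cap B(y,\vre\rho)=\vrn$, and $B(x,\rho)\cap B(y,\vre\rho)\cap\supp\mu=\vrn$ all make \equ{axiom} trivial (the first after taking $C>1$, the other two because the left-hand side vanishes). Otherwise choose $z\in B(x,\rho)\cap B(y,\vre\rho)\cap\supp\mu$, so that $d(x,z)\le\rho$ and $B(y,\vre\rho)\subseteq B(z,2\vre\rho)$; then
$$\mu\big(B(x,\rho)\cap B(y,\vre\rho)\big)\;\le\;\mu\big(B(z,2\vre\rho)\big)\;\le\;c\,\vre^{\gamma}\mu\big(B(z,2\rho)\big)\;\le\;c\,\vre^{\gamma}\mu\big(B(x,3\rho)\big)\;\le\;C'\vre^{\gamma}\mu\big(B(x,\rho)\big),$$
where the second inequality is the efd estimate at $z$ (written as $2\vre\rho=\vre\cdot 2\rho$, so the ratio is $\vre<1$), the third uses $d(x,z)\le\rho$, and the fourth is the Federer estimate at $x$ comparing radii $3\rho$ and $\rho$. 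Here $C'$ depends only on the constants of Lemma \ref{equiv}, and the required $\rho_0$ is a fixed fraction of the thresholds there; enlarging the constant past $\max(C',1)$ restores the strict inequality in \equ{axiom} and absorbs the $\vre\ge1$ case.

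The only step that is not pure bookkeeping is the Federer half of the first implication, where an \emph{upper} bound on ball measures has to be turned into a \emph{lower} bound; it is also the one place where we use that $\R$ (unlike a general metric space) has the property that the annulus between two concentric balls is covered by boundedly many balls of radius equal to its width. Everything else is the triangle inequality together with Lemma \ref{equiv}.
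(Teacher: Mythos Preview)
Your argument is correct and follows essentially the same route as the paper's proof. For the forward direction you cover the annulus $B(x,\rho)\setminus B(x,a\rho)$ by two balls of radius $\vre\rho=\tfrac{1-a}{2}\rho$ centered at the midpoints of the two side intervals; the paper does the same, parametrizing by $\vre$ first (with $\vre<1/4$ and $C\vre^\gamma<1/2$) and taking the inner radius to be $(1-2\vre)\rho$. For the converse you pick a support point $z$ in $B(x,\rho)\cap B(y,\vre\rho)$ and run the chain $\mu\big(B(z,2\vre\rho)\big)\le c\vre^{\gamma}\mu\big(B(z,2\rho)\big)\le c\vre^{\gamma}\mu\big(B(x,3\rho)\big)\le C'\vre^{\gamma}\mu\big(B(x,\rho)\big)$, which is exactly the paper's computation (with $z$ called $y'$ there and the threshold taken as $\rho_0/3$). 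Your closing remark about the annulus-covering step being the only place where $\R$ is used is apt and matches the paper's later observation that the proposition fails in higher dimensions.
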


The `if' part is due to Urbanski, see \cite[Lemma 7.1]{U4}; we include 
a proof to make the paper self-contained.  

\begin{proof}  Let $\mu$ be $(C,\gamma)$-absolutely decaying, and let $\rho_0$ be as in 
Definition \ref{decay}. Taking $x = y$ and $c = C$ in \equ{axiom} readily  implies \equ{rightside}, i.e.\ the efd property. To show Federer,  
  take  $0<\rho\leq\rho_0$ and $x\in\supp \mu$, and let
 $\vre < 1/4$  satisfy ${C}{\vre^{\gamma}}<{1}/{2}$.
Choose $y_1$ and $y_2$ to be the two distinct points satisfying  $|x-y_i|=(1-\vre)\rho$, $i=1,2$. It clearly follows from Definition \ref{decay} that $\mu$ is non-atomic; 
thus we can write 
$$\mu\big(B(x,\rho)\big) =\mu\big(B(x,\rho)\cap B(y_1,\vre \rho)\big)+\mu\left(B\big(x,(1-2\vre)\rho\big)\right)+\mu\big(B(x,\rho)\cap B(y_2,\vre\rho)\big).$$
Therefore, by \equ{axiom},
$$\mu\big(B(x,\rho)\big) \le \mu\left(B\big(x,(1-2\vre)\rho\big)\right) + 2C\vre^{\gamma}\mu\big(B(x,\rho)\big).$$
Setting $\vre_0 =1-2\vre$ and $\delta=1-2C\vre^{\gamma}$ we get \equ{Federer}.

Conversely, suppose that $\mu$ is both  Federer and efd. In view of Lemma \ref{equiv}, for some $\rho_0 > 0$
and $c_1,c_2,\gamma_1,\gamma_2 > 0$
one has 
$$ c_1\vre^{\gamma_1}\mu\big(B(x,\rho)\big) \le \mu\big(B(x,\vre \rho)\big) \le c_2\vre^{\gamma_2}\mu\big(B(x,\rho)\big)$$
for all
$0< \rho\leq \rho_{0}$,  $x\in \supp \mu$  and $0 < \vre < 1$. Now take 
$\rho < \rho_0/3$ and $y \in B(x,\rho)$.
If $\mu\big(B(x,\rho) \cap B(y,\vre\rho)\big) = 0$, we are done.
Otherwise, there exists $y^{\prime} \in \supp \mu \cap B(y,\vre\rho) \cap B(x,\rho)$. Then
$$\mu(B(x,\rho) \cap B(y,\vre\rho)) \leq \mu(B\big(y^{\prime},2\vre\rho)\big)
 \leq c_2\vre^{\gamma_2}\mu(B\big(y^{\prime},2\rho)\big)$$
 $$
 \leq  c_2\vre^{\gamma_2}\mu\big(B(x,3\rho)\big)
 \leq c_2 c_1^{-1}3^{\gamma_1}\vre^{\gamma_2}\mu\big(B(x,\rho)\big) \,,
 $$
which gives \equ{axiom} with $C = c_2c_1^{-1}3^{\gamma_1}$ and  $\gamma = \gamma_2$.
\end{proof}

In particular, suppose that  $\mu$ {\sl satisfies a power law\/}, i.e.\  there exist positive $\gamma, k_1,k_2,\rho_0$
such that for every $x\in \supp \, \mu$ and $0 < \rho<\rho_0$ one has
$$k_1\rho ^{\gamma}\leq\mu\big(B(x,\rho)\big)\leq k_2\rho^{\gamma}\,;
$$
then $\mu$ is clearly efd and Federer, hence absolutely decaying. However there exist
examples of absolutely decaying measures without a power law, see \cite[Example 7.5]{KW1}.
Also, recall that the
{\sl lower pointwise dimension\/} of $\mu$ at $x$ is defined as
$$\underline{d}_\mu(x) \df \liminf_{\rho\to 0} \frac{\log\mu(B(x,\rho))}{\log \rho}\,,
$$
and, for an open $U$ with $\mu(U) > 0$ let
\eq{dmu}{\underline{d}_\mu(U) \df \inf_{x\in \supp\,\mu\cap U}\ \underline{d}_\mu(x) \,.}
Then it is known, see e.g.\ \cite[Proposition 4.9]{Fa}, that \equ{dmu} constitutes a lower bound for the \hd\ of $\supp\,\mu\cap U$ (this bound is sharp when 
$\mu$ satisfies a power law). It is easy to see that
$\underline{d}_\mu(x)\ge \gamma$ for every $x\in \supp \, \mu$ 
whenever $\mu$  is $(C,\gamma)$-absolutely decaying: indeed, let
$\rho_0$ be as in Definition \ref{decay} and take $\rho < \rho_0$ and $x\in \supp \, \mu$; then, letting $\vre =\frac{\rho}{\rho_0}$, one has
$$\mu\big(B(x,\rho)\big) \le C\left(\frac{\rho}{\rho_0}\right)^\gamma\mu\big ( B(x,\rho_0)\big),$$
thus, for $\rho < 1$, 
$$
\frac{\log \mu\big(B(x,\rho)\big)}{\log \rho} \ge \gamma 
+ \frac{\log C - \gamma \log \rho_0 + \log \mu\big(B(x,\rho_0)\big)}{\log \rho},
$$
and the claim follows.
\smallskip

In the next section we will show that sets supporting  absolutely decaying measures on $\R$
work very well as playing fields for Schmidt's game. The aforementioned lower estimate
for $\underline{d}_\mu(x)$ will be used to provide a lower bound for the \hd\ of winning sets of the game.

\ignore{\medskip

 We remark that  absolutely decaying measures 
 were originally  defined on $\R^n$; namely, a measure on $\R^n$
 is  absolutely decaying if it 
 satisfies \equ{axiom} with balls replaced by neighborhoods of 
  affine hyperplanes. In this generality the analogue of Proposition 
 \ref{equiv-af} \comdima{Find a way to rephrase?}
is false; in fact the Federer condition has to be assumed in order to produce a multi-dimensional
analogue of the main results of this paper. }
 
\section{Schmidt's game}
\label{Schmidt}
 In this section we describe
the game, first 
introduced by  Schmidt in \cite{S1}.
Let $(X,d)$ be a complete metric space.
Consider  $\Omega \df X \times \mathbb{R}_+$, and define a 
partial ordering
\begin{center}
$(x_2,\rho_2)\le_{s}(x_1,\rho_1)$\  if \  $\rho_2+d(x_1,x_2)\le \rho_1$.
\end{center} 
We associate to each pair $(x,\rho)$ a ball in $(X,d)$ via the `ball' function 
$B(\cdot)$ as in \equ{ballsdef}. 
Note that $(x_2,\rho_2)\le_{s}(x_1,\rho_1)$ clearly implies (but is not necessarily implied by)
$B(x_2,\rho_2) \subset B(x_1,\rho_1)$. However the two conditions are equivalent when
$X$ is a Euclidean space. 


Schmidt's game is played by two players, whom, following a notation used in \cite{KW2}, we will call\footnote{The players were referred to as  `white' and  `black' by Schmidt, and as $A$ and $B$ in some subsequent literature; a suggestion to use the
Alice/Bob  nomenclature is due  to Andrei Zelevinsky.}
Alice and Bob. The two players are
equipped with parameters $\alpha$ and $\beta $ 
respectively, satisfying $0<\alpha ,\beta <1$. 
 Choose a subset ${S}$ of $ X$ (a target set).
The game starts with Bob picking $x_1\in X$ and $\rho > 0$, 
hence specifying a pair $\omega_1 = (x_1,\rho)$. Alice 
and Bob then take turns choosing $\omega'_k = (x'_k,\rho'_k)\le_s\omega_k$
and $\omega_{k+1}= (x_{k+1},\rho_{k+1})\le_s\omega'_k$ respectively satisfying 
\eq{balls}{\rho_k' = \alpha \rho_k\text{ and }\rho_{k+1} = \beta \rho_k'\,.}
As the game is played on a complete metric space
and the diameters of the nested  balls 
\begin{center}
$B(\omega_1) \supset  \ldots\supset 
B(\omega_k) \supset B(\omega'_k) \supset \ldots$
\end{center}
tend to zero as $k\rightarrow\infty$, 
the intersection of these balls 
is a point $x_\infty\in X$. Call Alice the winner if $x_\infty\in {S}$. 
Otherwise Bob is declared the winner. 
A strategy consists of specifications for a player's choices 
of centers for his or her balls 
given the opponent's previous moves. 

If for certain $\alpha$, $\beta$ and a target set ${S}$ 
Alice has a winning strategy, i.e., 
a strategy for winning the game regardless of how well Bob plays,
we say that ${S}$ is an 
{\sl $(\alpha , \beta)$-winning set\/}.
If ${S}$ and $\alpha$ are such that ${S}$ is an $(\alpha , \beta)$-winning set 
for all $\beta$ in $(0, 1)$, 
we say that ${S}$ is an 
{\sl $\alpha $-winning\/} set. 
Call a set {\sl winning\/} if such an $\alpha $ exists.

Intuitively one expects winning sets to be large. Indeed, every such set is clearly dense in $X$;
moreover, under some additional assumptions on the metric space winning sets can be proved to
have positive, and even full, \hd.
For example, the fact that a winning subset of $\br^n$ has \hd\ $n$
is due to Schmidt  \cite[Corollary 2]{S1}. Another useful result of Schmidt \cite[Theorem 2]{S1} states that
the intersection of countably many $\alpha$-winning sets 
is $\alpha$-winning. 

Schmidt himself used the machinery of the game he invented to prove that certain
subsets of $\br$ or $\br^n$ are winning, and hence have full \hd. For example, he showed
 \cite[Theorem 3]{S1} that $\bold{BA}$
is $\alpha$-winning for any $0 < \alpha \le 1/2$. The same conclusion, according to   \cite[\S 8]{S1}, holds for the sets $E_b$
defined in the introduction.
 
Now let $K$ be a closed subset of $X$.
We will say that a subset $S$ of $X$ is
{\sl $(\alpha , \beta)$-winning on $K$\/} (resp., {\sl $\alpha $-winning on $K$\/}, {\sl winning on $K$\/})
if $S\cap K$ is $(\alpha , \beta)$-winning  (resp., $\alpha $-winning,  winning) for Schmidt's
game played on the metric space $K$ with the metric induced from $(X,d)$. 
In the present paper we let $X=\R$ and take $K$ to be the support of an absolutely decaying measure.
In other words, since the metric is induced, playing the game on $K$ amounts to choosing balls in $\R$
according to the rules of a game played on $\R$, but with an additional constraint that the
centers of all the balls lie in $K$.

\smallskip

It turns out, as was observed in \cite{F},  that the decay property \equ{axiom} is very helpful for 
playing Schmidt's game on $K$. Moreover, as demonstrated by the following proposition proved in \cite{KW2}, the decay conditions are important for  estimating the \hd\ of winning sets:

\ignore{
for any $0 < \varepsilon,\delta < 1$, 
there exist $c_1, c_2 > 0$ 
such that if $K$ is the support of a Federer measure $\mu$ on a metric space $X$
with constants $\varepsilon$ 
and $\delta$ as in Definition \ref{decay-old},  $U\subset X$ is open
with $\mu(U) > 0$, and 
$S$ is $(\alpha,\beta)$-winning on $K$ with $0 < \beta < 1/2$, then
$$\dim(S\cap K\cap U) \ge
 \underline{d}_\mu(U) -
\frac{c_1|\log \alpha| +c_2}{|\log \alpha| + |\log \beta|}\,.$$
From this one can easily derive the following
}

\begin{prop} 
\label{dimbound} \cite[Proposition 5.1]{KW2}
Let $K$ be the support of a Federer measure $\mu$  on a metric space $X$, 
and let $S$ be winning on $K$. Then for any open $U\subset X$ 
with $\mu(U) > 0$ one has 
$$\dim(S\cap K\cap U) \ge
 \underline{d}_\mu(U)\,.$$
\end{prop}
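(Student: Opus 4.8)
The plan is to deduce the proposition from a quantitative estimate and then pass to a limit. Recall that ``$S$ is winning on $K$'' means there is a fixed $\alpha\in(0,1)$ for which $S\cap K$ is $(\alpha,\beta)$-winning on $K$ for \emph{every} $\beta\in(0,1)$. Hence it suffices to prove that there are constants $c_1,c_2>0$, depending only on the Federer constants of $\mu$, such that for every $0<\beta<\tfrac12$ and every open $U$ with $\mu(U)>0$, if $S\cap K$ is $(\alpha,\beta)$-winning on $K$ then
\[
\dim(S\cap K\cap U)\ \ge\ \underline{d}_\mu(U)-\frac{c_1\absolute{\log\alpha}+c_2}{\absolute{\log\alpha}+\absolute{\log\beta}}\,.
\]
Granting this, fix $\alpha$ and let $\beta\to 0^{+}$; the subtracted term tends to $0$, and the proposition follows.

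To prove the displayed estimate, fix $s<\underline{d}_\mu(U)$ and $0<\beta<\tfrac12$, and let $\Phi$ be a winning strategy for Alice in the $(\alpha,\beta)$-game on $K$. By Lemma~\ref{equiv} the Federer property yields $c_F,\gamma_0,\rho_0>0$ with $\mu(B(x,t\rho))\ge c_F\,t^{\gamma_0}\,\mu(B(x,\rho))$ whenever $x\in K$, $0<\rho\le\rho_0$, $0<t<1$. The heart of the argument is a branching play against $\Phi$. Choose $x_1\in K\cap U$ and $\rho_1>0$ with $B(x_1,2\rho_1)\subset U$ and $2\rho_1<\rho_0$, and set $\rho_{k+1}=\alpha\beta\rho_k$. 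Starting from Bob's opening move $(x_1,\rho_1)$, play the game on $K$ against $\Phi$ but let Bob branch at each of his turns: whenever Alice (following $\Phi$) answers a ball $(x_k,\rho_k)$ by $(x_k',\alpha\rho_k)$, pick a maximal $4\rho_{k+1}$-separated set $p^{(k)}_1,\dots,p^{(k)}_{N_k}$ in $K\cap B\big(x_k',(1-\beta)\alpha\rho_k\big)$ and let Bob continue, in the $i$-th descendant play, with the ball $(p^{(k)}_i,\rho_{k+1})$. Every such move is legal, since $p^{(k)}_i\in K$ and $\rho_{k+1}+d(x_k',p^{(k)}_i)\le\alpha\rho_k$, and sibling balls are disjoint. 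Since $\Phi$ wins each play, the point it determines lies in $S\cap K$, and these points all lie in $B(x_1,\rho_1)\subset U$; let $\mathcal{C}\subset S\cap K\cap U$ be the compact set of them, and write $B^{(k)}$, $B^{(k),i}$ for a generic level-$k$ ball and its children.

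Define a probability measure $\nu$ on $\mathcal{C}$ by splitting the $\nu$-mass of each $B^{(k)}$ among its children in proportion to the numbers $\mu(B^{(k),i})$. Two applications of the Federer inequality --- one to pass from the radius-$4\rho_{k+1}$ balls, which cover $K\cap B(x_k',(1-\beta)\alpha\rho_k)$ by maximality, down to radius $\rho_{k+1}$, and one to compare $\mu(B(x_k',(1-\beta)\alpha\rho_k))$ with $\mu(B^{(k)})$ --- yield $\sum_i\mu(B^{(k),i})\ge\theta\,\mu(B^{(k)})$ for a constant $\theta=\theta(\alpha,c_F,\gamma_0)\in(0,1]$ that is \emph{independent of $\beta$ and $k$} and satisfies $\absolute{\log\theta}\le c_1\absolute{\log\alpha}+c_2$. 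Iterating, $\nu(B^{(k+1)})\le\theta^{-k}\,\mu(B^{(k+1)})/\mu(B(x_1,\rho_1))$. For $z\in\mathcal{C}$ one has $B^{(k+1)}\subset B(z,2\rho_{k+1})$, so, since $\underline{d}_\mu(z)\ge\underline{d}_\mu(U)>s$, one gets $\mu(B^{(k+1)})\le\rho_{k+1}^{\,s}$ once $k$ is large. Combining this with the fact that a ball $B(z,r)$ with $\rho_{k+1}\le r<\rho_k$ meets only the level-$k$ ball of $z$'s branch and at most $C(r/\rho_{k+1})^{\gamma_0}$ of its children (Federer once more), a routine computation gives $\nu(B(z,r))\le C(z)\,r^{\,t}$ for all small $r$, where $t=s-\absolute{\log\theta}/(\absolute{\log\alpha}+\absolute{\log\beta})$. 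By the mass distribution principle, $\dim\mathcal{C}\ge t$, so $\dim(S\cap K\cap U)\ge t$; now let $s\uparrow\underline{d}_\mu(U)$.

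The step I expect to be the main obstacle is the branching play together with the constant bookkeeping in the previous paragraph: one must verify that Bob's branching moves are genuinely legal plays of Schmidt's game on $K$ and, crucially, that even though Alice chooses the centers $x_k'$ adversarially, the Federer property forces each ball to retain a definite $\mu$-fraction $\theta$ of its parent's mass with $\theta$ \emph{not} degenerating as $\beta\to 0$ --- this is exactly what makes the dimension loss vanish in the limit. The remaining ingredients --- running the plays in a tree, the two counting estimates coming from the doubling inequality of Lemma~\ref{equiv}, and the mass distribution principle --- are standard, and the possible non-uniformity of $\underline{d}_\mu$ causes no trouble, since the only pointwise information used is $\mu(B(z,\rho))\le\rho^{\,s}$ at points $z\in\mathcal{C}$ for small $\rho$, which holds because $s<\underline{d}_\mu(U)\le\underline{d}_\mu(z)$.
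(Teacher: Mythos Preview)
The paper does not supply its own proof of this proposition: it is quoted from \cite[Proposition~5.1]{KW2} and used as a black box, so there is no in-paper argument to compare against.

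That said, your approach is exactly the one underlying the cited result (and, ultimately, Schmidt's original dimension estimate for winning sets): establish the quantitative bound
\[
\dim(S\cap K\cap U)\ \ge\ \underline{d}_\mu(U)-\frac{c_1|\log\alpha|+c_2}{|\log\alpha|+|\log\beta|}
\]
by letting Bob branch over a maximal separated net inside each of Alice's balls, put a Frostman-type measure on the resulting Cantor set, and then let $\beta\to 0$. Your bookkeeping checks out: with $\beta<\tfrac12$ the factor $(1-\beta)$ is bounded below by $\tfrac12$, so the retained $\mu$-fraction $\theta$ depends only on $\alpha$ and the Federer data, which is precisely what makes the error term vanish in the limit; the $4\rho_{k+1}$-separation guarantees both that Bob's moves are legal ($\rho_{k+1}+d(x_k',p_i^{(k)})\le\alpha\rho_k$) and that siblings are $2\rho_{k+1}$-apart; and since the game is played on $K$, Alice's centers $x_k'$ lie in $K$, so the Federer inequality of Lemma~\ref{equiv} applies at them. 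The pointwise (rather than uniform) estimate $\mu(B(z,\rho))\le\rho^s$ is enough because the mass distribution principle only requires $\underline{d}_\nu(z)\ge t$ at $\nu$-a.e.\ point. One small simplification: instead of counting how many children meet $B(z,r)$ via Federer, you can bound $\sum_i \mu(B^{(k),i})$ over those children directly by $\mu\big(B(z,3r)\big)$, since the children meeting $B(z,r)$ are contained in that ball; this saves one doubling estimate.
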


In particular, in the above proposition one can replace $\underline{d}_\mu(U)$ 
with $\gamma$ if $\mu$ is $(C,\gamma)$-absolutely decaying.
Note that this generalizes  estimates for the \hd\ of winning sets due to Schmidt \cite{S1}
 for $\mu$ being Lebesgue measure on
$\R^n$, and to Fishman  \cite[\S 5]{F} for measures satisfying a power law.

\smallskip

The next lemma is another example of the absolute decay of a measure being helpful for
playing Schmidt's game on its support:

\begin{lem}
\label{log turns} 
Let $K$ be the support of a $(C,\gamma)$-absolutely decaying measure on $\R$,
and
 let $\alpha$ be as in  \equ{alpha}.
Then for every $0 < \rho < \rho_0$, $x_1\in K$ and $y_1,\dots,y_N \in \R$,
there exists $x'_1 \in K$ with 
\eq{containment}{B(x'_1,\alpha \rho) \subset B(x_1,\rho)}
and, for at least half of the points $y_i$,
\eq{distance}{d(B(x'_1,\alpha \rho),y_i) > \alpha\rho.}
\end{lem}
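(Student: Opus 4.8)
The plan is to use the $(C,\gamma)$-absolutely decaying property to show that inside $B(x_1,\rho)$ one can find a sub-ball $B(x_1',\alpha\rho)$ whose centre avoids an $\alpha\rho$-neighbourhood of at least half of the $y_i$'s. The idea is a pigeonhole/counting argument on $\mu$-mass.

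First I would reduce to the case where all the $y_i$ lie in (or near) $B(x_1,\rho)$: if $d(y_i,x_1)$ is large enough that $B(y_i,2\alpha\rho)$ is disjoint from $B(x_1,\rho)$, then \emph{any} choice of $x_1' \in K \cap B(x_1,(1-\alpha)\rho)$ automatically satisfies \equ{distance} for that $i$, so such $y_i$'s are harmless and can be discarded. So assume each $y_i$ is within, say, distance $\rho$ of $x_1$. Next, for each $i$ consider the "bad set" $Z_i \df B(x_1,\rho) \cap B(y_i, 2\alpha\rho)$: if $x_1'$ is chosen in $K$ with $B(x_1',\alpha\rho)\subset B(x_1,\rho)$ and $x_1' \notin B(y_i,2\alpha\rho)$, then $d(B(x_1',\alpha\rho),y_i) \ge d(x_1',y_i) - \alpha\rho > \alpha\rho$, giving \equ{distance}. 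By \equ{axiom} (with $\vre = 2\alpha$, legitimate since $\alpha \le \tfrac14(3C)^{-1/\gamma} < \tfrac14$ forces $2\alpha<1$ and $\rho<\rho_0$), we get
$$\mu(Z_i) < C(2\alpha)^\gamma \mu\big(B(x_1,\rho)\big).$$
With $\alpha$ as in \equ{alpha} one has $C(2\alpha)^\gamma \le C\cdot 2^\gamma \cdot \tfrac{1}{4^\gamma}\cdot\tfrac{1}{3C} = \tfrac{2^\gamma}{4^\gamma}\cdot\tfrac13 \le \tfrac13 \cdot \tfrac12 = \tfrac16$ (using $2^\gamma \le 4^\gamma$... actually $2^\gamma/4^\gamma = 2^{-\gamma}\le 1$, so the bound is $\le \tfrac13$; one should in fact be a little more careful and aim for a clean constant), so $\mu(Z_i) < \tfrac13\mu(B(x_1,\rho))$ — and with the factor-of-$4$ slack in \equ{alpha} there is comfortable room to make the total mass of the bad sets small.

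Now comes the counting step, which is the crux. Let $B \df B(x_1,\rho)$ and consider the "core" ball $B^* \df B(x_1, (1-\alpha)\rho)$; note that any $x_1'\in K\cap B^*$ automatically satisfies the containment \equ{containment}. One needs $\mu(B^*)$ to be a definite fraction of $\mu(B)$: this follows from the Federer property (Proposition \ref{equiv-af}), or more directly from the absolute-decay computation already used in the proof of Proposition \ref{equiv-af} (writing $B$ as $B^*$ plus two thin boundary caps each of $\mu$-mass $< C\alpha^\gamma\mu(B)$, so $\mu(B^*) > (1 - 2C\alpha^\gamma)\mu(B)$, and again \equ{alpha} makes $2C\alpha^\gamma$ small). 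Then consider the function on $K\cap B^*$ that counts, for each $x_1'$, the number of indices $i$ with $x_1' \in Z_i$, i.e. $\sum_i \mathbf{1}_{Z_i}(x_1')$. Integrating against $\mu$ over $K\cap B^*$ gives at most $\sum_i \mu(Z_i) < N\cdot c\,\mu(B)$ where $c$ can be taken $\le \tfrac13$ (even $<\tfrac13$ with the slack). Since this is at most $\tfrac13 N\, \mu(B) < \tfrac12 N\,\mu(B^*)$ once $\mu(B^*) > \tfrac23\mu(B)$ — again guaranteed by the slack in \equ{alpha} — the average number of bad indices is strictly less than $N/2$, so there exists a point $x_1'\in K\cap B^*$ (hence in $\supp\mu$) lying in fewer than $N/2$ of the $Z_i$. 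For that $x_1'$, inequality \equ{distance} holds for more than half — hence for at least half — of the $y_i$, and \equ{containment} holds by choice of $B^*$. This completes the proof.

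The main obstacle, and the only place requiring care, is bookkeeping the constants so that two competing estimates both come out favourably: on one hand $C(2\alpha)^\gamma$ (the per-$i$ bad mass, wanting to be $\le \tfrac13$) and on the other hand $1 - 2C\alpha^\gamma$ (the mass of the core ball $B^*$, wanting to be $\ge \tfrac23$), since the averaging argument needs $\sum_i\mu(Z_i) < \tfrac12\mu(B^*)\cdot N$. The choice $\alpha \le \tfrac14(3C)^{-1/\gamma}$ is designed precisely so that $C\alpha^\gamma \le \tfrac{1}{3\cdot 4^\gamma}$ is genuinely tiny, which handles the $B^*$ estimate with room to spare, while $C(2\alpha)^\gamma \le \tfrac{2^\gamma}{3\cdot 4^\gamma} = \tfrac{1}{3\cdot 2^\gamma} \le \tfrac13$ handles the per-$i$ estimate; the factor $\tfrac14$ rather than, say, $\tfrac12$ in \equ{alpha} is what provides the slack needed to push $\sum\mu(Z_i)$ safely below $\tfrac12\mu(B^*)N$ rather than merely below $\mu(B)N$. (One should double-check whether $2\alpha$ or $\alpha$ is the right dilation factor to feed into \equ{axiom} — using radius $2\alpha\rho$ for the excluded balls $B(y_i,2\alpha\rho)$ is the robust choice, since it directly yields $d(x_1',y_i)>2\alpha\rho$ hence $d(B(x_1',\alpha\rho),y_i)>\alpha\rho$ with no further loss.)
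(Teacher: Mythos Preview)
Your averaging idea is natural, but the constants do not close for small $\gamma$, so the argument as written has a genuine gap. You need
\[
N\cdot C(2\alpha)^\gamma\,\mu(B)\ <\ \tfrac{N}{2}\,\mu(B^*),
\]
i.e.\ $2C(2\alpha)^\gamma < \mu(B^*)/\mu(B)$. From \equ{alpha} you only get $C(2\alpha)^\gamma \le \tfrac{1}{3\cdot 2^\gamma}$ and (via the two-cap estimate) $\mu(B^*)/\mu(B) > 1 - \tfrac{2}{3\cdot 4^\gamma}$. As $\gamma\to 0^+$ these bounds tend to $\tfrac13$ and $\tfrac13$ respectively, so the needed inequality $\tfrac{2}{3\cdot 2^\gamma} < 1 - \tfrac{2}{3\cdot 4^\gamma}$ fails (left side $\to \tfrac23$, right side $\to \tfrac13$). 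Concretely, for $\gamma$ below roughly $1/2$ your bookkeeping cannot be made to work with the given $\alpha$; the ``comfortable slack'' you invoke is not there. The factor $\tfrac14$ in \equ{alpha} is calibrated for a different argument.

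The paper's proof sidesteps this by a dichotomy that reduces the $N$ obstacles to just \emph{three}. If at most half the $y_i$ lie in $B(x_1,2\alpha\rho)$, then $x_1'=x_1$ already works. Otherwise at least half the $y_i$ lie inside $B(x_1,2\alpha\rho)$, and to stay $\alpha\rho$-away from \emph{all} of those it suffices that $x_1'\notin B(x_1,4\alpha\rho)$; containment \equ{containment} is ensured by also excluding $B(x_0,4\alpha\rho)$ and $B(x_2,4\alpha\rho)$, where $x_0,x_2$ are the endpoints of $B(x_1,\rho)$. Now only three balls of radius $4\alpha\rho$ must be avoided, each of $\mu$-mass strictly less than $C(4\alpha)^\gamma\mu\big(B(x_1,\rho)\big)\le\tfrac13\mu\big(B(x_1,\rho)\big)$ by \equ{alpha}, so their union cannot cover $K\cap B(x_1,\rho)$. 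The key insight your approach misses is that once the relevant $y_i$ are known to cluster in a single $2\alpha\rho$-ball, one should exploit that clustering and exclude one slightly larger ball rather than treat them as $N$ independent bad sets.
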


\begin{proof}
If $B(x_1,2\alpha\rho)$ contains not more than half of the points $y_i$,
then clearly we can take $x'_1 = x_1$.
Otherwise, $B(x_1,2\alpha\rho)$ contains at least half
of the points $y_i$. Let $x_0$ and $x_2$ be the endpoints of $B(x_1,\rho)$.
By \equ{axiom}
$$\mu\big(B(x_i,4\alpha\rho)\big) < C(4\alpha)^{\gamma}\mu\big(B(x_1,\rho)\big) \under{\equ{alpha}} < \frac{1}{3}\mu\big(B(x_1,\rho)\big)\,,$$
for $i=0,1,2$,
so there is a point $x'_1 \in K$ which is not in 
$B(x_i,4\alpha\rho)$ for $i=0,1,2$, and 
hence satisfies both \equ{containment} and \equ{distance} 
for all $y_i$ contained in $B(x_1,2\alpha\rho)$.
\end{proof}

We note that \equ{containment} in particular implies that $(x'_1,\alpha\rho)\le_s(x_1,\rho)$;
thus it would be a valid choice of Alice in an $(\alpha,\beta)$-game played 
on $K$ in response to $B(x_1,\rho)$ chosen by Bob. Therefore the above lemma can be  used 
to construct a winning strategy for Alice  choosing balls which stay away from some prescribed
sets of `bad' points $y_1,\dots,y_N$. This idea is motivated by the proof of Lemma 1 in \cite{M2}.

\smallskip

Furthermore, the above lemma immediately implies

\begin{cor}\label{minusone}
Let $K$ be the support of a $(C,\gamma)$-absolutely decaying measure on $\R$,
 let $\alpha$ be as in  \equ{alpha},  let $S\subset \R$ be $\alpha$-winning on $K$,
 and let $S'\subset S$ be countable. Then $S\ssm S'$ is also  $\alpha$-winning on $K$.
\end{cor}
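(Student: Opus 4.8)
The plan is to reduce the statement to the case in which a single point is removed, and then to appeal to Schmidt's theorem on countable intersections of $\alpha$-winning sets. If $S' = \varnothing$ there is nothing to prove, so enumerate $S' = \{z_1, z_2, \dots\}$ (a finite or countably infinite list). Since membership in $K$ is unaffected by deleting points, one has $(S \ssm S') \cap K = (S \cap K) \cap \bigcap_j \big( \R \ssm \{z_j\} \big)$. By hypothesis $S \cap K$ is $\alpha$-winning for the game played on $K$, and $K$, being closed in $\R$, is a complete metric space, so Schmidt's countable intersection theorem (\cite[Theorem 2]{S1}) is available for the game on $K$. Hence it suffices to prove that for each fixed $z \in \R$ the set $\R \ssm \{z\}$ is $\alpha$-winning on $K$, with $\alpha$ as in \equ{alpha}.

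For this single-point case I would write down an explicit winning strategy for Alice based on Lemma \ref{log turns}. Fix $z \in \R$ and $\beta \in (0,1)$, and let $\rho_0$ be as in Definition \ref{decay}. After Bob opens with $\omega_1 = (x_1,\rho)$, the radii of Bob's balls satisfy $\rho_{k+1} = \alpha\beta\rho_k$ with $\alpha\beta < 1$, so there is a first index $k$ with $\rho_k < \rho_0$; on every earlier round Alice simply passes, choosing $\omega_i' = (x_i, \alpha\rho_i)$, which is legal since $x_i \in K$ and $\alpha < 1$ give $(x_i,\alpha\rho_i) \le_s (x_i,\rho_i)$. On round $k$ Alice invokes Lemma \ref{log turns} with the single ``bad'' point $y_1 = z$, obtaining $x_k' \in K$ with $B(x_k',\alpha\rho_k) \subset B(x_k,\rho_k)$ and $d\big(B(x_k',\alpha\rho_k), z\big) > \alpha\rho_k > 0$; in particular $z \notin B(x_k',\alpha\rho_k)$. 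As observed right after Lemma \ref{log turns}, the ball containment gives $(x_k',\alpha\rho_k) \le_s (x_k,\rho_k)$, so this is a valid move. From round $k$ onward every ball of the game is contained in $B(x_k',\alpha\rho_k)$, hence so is the outcome point $x_\infty$; therefore $x_\infty \ne z$, i.e. $x_\infty \in \R \ssm \{z\}$. As $\beta \in (0,1)$ was arbitrary, $\R \ssm \{z\}$ is $\alpha$-winning on $K$, which completes the reduction.

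The only points needing a little care — and none of them is a genuine obstacle — are: (i) verifying that both the ``pass'' move and the Lemma \ref{log turns} move are legitimate $\le_s$-moves for Alice, each of which reduces to the Euclidean equivalence between $\le_s$ and ball containment recorded in \S\ref{Schmidt}; (ii) noting that $K$ is complete, so that the countable intersection theorem may be applied to the game on $K$ rather than on $\R$; and (iii) accommodating Bob's possibly large opening radius, which is handled by the finitely many pass rounds. In effect all the substance is already contained in Lemma \ref{log turns}, which is precisely why this corollary is an immediate consequence of it.
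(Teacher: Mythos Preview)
Your proof is correct and follows essentially the same approach as the paper's: reduce via the countable intersection property to showing that $\R\ssm\{z\}$ is $\alpha$-winning on $K$ for each $z$, then have Alice wait until the radius drops below $\rho_0$ and apply Lemma~\ref{log turns} with $N=1$. The extra care you take in checking legality of moves and completeness of $K$ is fine but not strictly necessary here.
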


\begin{proof} In view of the countable intersection property,
it suffices to show that $\R\ssm\{y\}$ is $(\alpha,\beta)$-winning on $K$
for any $y$ and any $\beta$. We let Alice play arbitrarily until the radius of a ball chosen by Bob is
not greater than $\rho_0$. Then apply Lemma \ref{log turns} with $N = 1$ and $y_1 = y$,
which yields a ball not containing $y$. Afterwards she can keep playing arbitrarily, winning the game.
\end{proof}

We note that such a property is demonstrated in \cite[Lemma 14]{S1} for games played on a Banach space of positive dimension.

\ignore{

In the next section we will describe a certain class of closed subsets $K$ of $\R$ on which
sets like $\bold{BA}$ or $\tilde E(\ct,y)$ discussed in the introduction
happen to be 
winning. 
In particular it will imply that the intersections
$K\cap \tilde E(\ct,y)$
\comdima{to be changed later, if we wish.} or $K\cap \bold{BA}$  have positive, and in many cases full, \hd.

\smallskip

We close the section with a geometric lemma which will be used in the
proof of our main theorem.
}

\section{Proofs}
\label{proof}

\begin{proof}[Proof of Theorem \ref{theorem}]
Let $\alpha$ be as in 
\equ{alpha} and let $0 < \beta < 1$. 
Suppose $K$ supports a $(C,\gamma)$-absolutely decaying measure,
$\varphi : \R\to\R$ is bi-Lipschitz,  $\ct = (t_n)$ is a sequence of positive reals
satisfying
\begin{equation}
\label{ratio}
\inf_n \frac{t_{n+1}}{t_n} = M>1\,,
\end{equation}
and $\cy = (y_n)$ is a sequence of points in $\ct$. Our goal is to specify
a strategy for Alice allowing to zoom in on $\varphi\big(\tilde E(\ct,\cy)\big)\cap K$.

Choose $N$ large enough so that 
\begin{equation}
\label{large N}
(\alpha\beta)^{-r} \leq M^N,\text{ where }r  \df \lfloor \log_2 N\rfloor +1.
\end{equation}
Here and hereafter $\lfloor \cdot \rfloor$ denotes the integer part.

Note that without loss of generality one can
replace the sequence
$\ct$ with its tail
 $\ct' \df (t_n: n \ge n_0)$;  indeed, it is easy to see that 
$$ \tilde E(\ct,\cy)
\smallsetminus \tilde E(\ct',\cy')\,,$$ where $\cy' \df (y_n: n \ge n_0)$, is at most countable;
therefore the claim follows from   Corollary \ref{minusone}.
Consequently, one can assume that\footnote{The same argument shows that the assumption of the lacunarity of $\ct$ 
in Theorem \ref{theorem} can
be weakened to {\sl eventual lacunarity\/}, that is, to $\liminf_{n\to\infty}\frac{t_{n+1}}{t_n} > 1$.} $t_n > 1$ for all $n$.

Let $L$ be a bi-Lipschitz  constant for $\varphi$; in other words, 
\eq{lip}{
\frac1L \le \frac{|\varphi(x) - \varphi(y)|}{|x-y|} \le L\quad \forall \,x\ne y\in\R\,.} 
The game begins with Bob choosing $(x_1,\rho^{\prime})\in \Omega = K\times \R_+$.
Let $k_0$ be the minimal positive integer satisfying
\eq{rho small}{\rho \df (\alpha\beta)^{k_0-1}\rho^{\prime}
	< \min\left(\frac{1}{2
	L}(\alpha\beta)^{-r+1},\rho_0\right),} 
where $\rho_0$ is as in Definition \ref{decay}. 
Alice will play arbitrarily until her $k_0$th turn.
Then $\omega_{k_0}=(x_2,\rho)$ for some $x_2\in K$.
Reindexing, set $\omega_1=\omega_{k_0}$.
Let $$c \df \frac{
\rho}{L}(\alpha\beta)^{3r}.$$
For an arbitrary $k \in \N$, define 
$$
I_k \df \{n\in\N :(\alpha\beta)^{-r (k-1)} \leq t_n 
	< 
	(\alpha \beta)^{-r k}\};
	$$
	note that $\#I_k \le N$ in view of (\ref{ratio}) and (\ref{large N}).
	
	 Our goal now is to describe Alice's strategy for choosing $\omega_i'\in\Omega$, $i\in\N$, 
to ensure that for any $k\in\N$, 
\eq{goal}{d\left(\pi\big(t_n\varphi^{-1}(x)\big),y_n\right) \geq c   
\ \text{ whenever  }\ 
x \in B(\omega'_{r(k+2) - 1})\ \text{  and }\ n\in I_k\,.}
Then if we let
 $$x_\infty \df \bigcap_i B(\omega^{\prime}_i) =  \bigcap_k B(\omega'_{r(k+2) - 1})\,,$$
 which is clearly an element of $K$, we will have $\varphi^{-1}(x_\infty)
 \in  \tilde E(\ct,\cy)$; in other words, \equ{goal} enforces that $x_\infty\in\varphi\big(\tilde E(\ct,\cy)\big)\cap K$, as required.

\smallskip
To achieve \equ{goal}, Alice may choose $\omega_i'$ arbitrarily for $i < 2r$.
Now fix  $k\in\N$ and observe that whenever $n\in I_k $ and 
 $m_1 \ne m_2 \in \Z$, one has
$$\left|\frac{y_n+m_1}{t_n} -\frac{y_n+m_2}{t_n} \right| \geq t_n^{-1} 
	> 
	(\alpha \beta)^{r k},$$
so, by \equ{lip},
\eq{preimages}{\left| \varphi \left(\frac{y_n+m_1}{t_n} \right) - 
	\varphi \left( \frac{y_n+m_2}{t_n} \right) \right|
	> 
	\frac{1}{L}(\alpha \beta)^{r k}.}
Because of \equ{rho small}, the 
diameter of $B(\omega_{r (k+1)})$ is
$$
2(\alpha\beta)^{r(k+1)-1}\rho < \frac{1}{
L}(\alpha\beta)^{rk},
$$
so by \equ{preimages} the set
$$Z 
\df  \left\{\varphi\left(\frac{y_n+m}{t_n} \right) : m \in \Z, \ 
	n\in I_k\right\}
$$
has at most $N$ elements in $B(\omega_{r (k+1)})$.
Applying Lemma \ref{log turns} $r$ times, 
Alice can choose $\omega_{r(k+1)}',\dots, \omega_{r(k+2)-1}'\in\Omega$ in such a way that
$$d\big(B(\omega'_{r(k+2) - 1}),Z\big)
	\geq (\alpha\beta)^{r(k+2)}\rho\,.$$
 Therefore, again by \equ{lip}, for any $x\in B(\omega'_{r(k+2) - 1})$,  $m \in \Z$
	and
	$n\in I_k$
 one has  
$$\big|t_n\varphi^{-1}(x)-(y_n+m)\big| \geq \frac{t_n}{L}\left|x - \varphi\left(\frac{y_n+m}{t_n} \right)\right|
 \geq \frac{t_n}{L}
	(\alpha\beta)^{r(k+2)}\rho
	\geq \frac{
	\rho}{L}(\alpha\beta)^{3r} = c\,,$$
which implies \equ{goal}.
\end{proof}

Recall that it was shown in \cite{S1} that   $\bold{BA}$ is a winning subset of $\R$.
In \cite{F}, this set, and its nonsingular affine images, 
was shown to be $\alpha$-winning on the support of 
any $(C,\gamma)$-absolutely decaying measure on $\R$, where   $\alpha$ depends only on
$C$ and $\gamma$. In what follows we prove a slight generalization of this result for
bi-Lipschitz images. The technique used is similar to the one used in the proof of the main theorem.
We include it for the sake of completeness.

\begin{thm}\label{ba}
Let $K$ be the support of a $(C,\gamma)$-absolutely decaying measure on $\R$,
and let $\alpha$ be as in \equ{alpha}. Then
for every bi-Lipschitz map $\varphi : \R\to \R$, the set
$\varphi(\bold{BA})$ is $\alpha$-winning on $K$.
\end{thm}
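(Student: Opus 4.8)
The plan is to mimic the structure of the proof of Theorem \ref{theorem}, replacing the lacunary-orbit condition with the badly approximable condition. Recall that $x\in\bold{BA}$ precisely when $x$ stays uniformly away (relative to $q^{-2}$) from every rational $p/q$. As in the previous proof, we fix $\alpha$ as in \equ{alpha}, fix an arbitrary $0<\beta<1$, let $L$ be a bi-Lipschitz constant for $\varphi$ satisfying \equ{lip}, and let Bob open with $(x_1,\rho')\in K\times\R_+$. Alice plays arbitrarily until the radius of Bob's ball is below $\rho_0$; reindexing, we may assume $\rho<\rho_0$ from the start and that the game is organized in \emph{blocks} of $r\df\lfloor\log_2 N\rfloor+1$ consecutive moves each (for $N$ to be chosen), so that after $r$ moves the radius contracts by a factor $(\alpha\beta)^r$. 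We intend to arrange that, at the end of block $k$, the current ball $B(\omega'_{rk})$ avoids all rationals $p/q$ of a certain range of denominators, pulled back through $\varphi$; the constant $c>0$ witnessing $\varphi^{-1}(x_\infty)\in\bold{BA}$ will be something like $c\df\frac{\rho}{L}(\alpha\beta)^{Cr}$ for a suitable absolute power.

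The key quantitative point is to choose the denominator ranges correctly. After $k$ blocks the ball radius is roughly $(\alpha\beta)^{rk}\rho$, so the natural range to kill in block $k$ is the set $R_k$ of rationals $p/q$ in the current ball with $(\alpha\beta)^{-r(k-1)}\le q<(\alpha\beta)^{-rk}$ (up to constants coming from $L$ and $\rho$). Two rationals with denominators in this range and lying in a ball of diameter $\asymp(\alpha\beta)^{rk}$ differ by at least $q^{-2}\gtrsim(\alpha\beta)^{2rk}$; a counting argument (Dirichlet-type spacing, exactly as one bounds the number of rationals with bounded denominator in a short interval) shows that $R_k$, hence $\varphi(R_k)$, has at most $N$ elements in the relevant ball once $N$ is chosen large enough depending only on $\alpha,\beta$ (via the relation $(\alpha\beta)^{-cr}\le(\alpha\beta)^{-rk}\big/(\alpha\beta)^{-r(k-1)}$ type inequality, analogous to \equ{large N}). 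Then Lemma \ref{log turns}, applied $r$ times within block $k+2$ (to allow the ball to shrink enough that block-$k$ rationals have become "visible"), lets Alice produce $\omega'_{r(k+2)-1}$ with $d\big(B(\omega'_{r(k+2)-1}),\varphi(R_k)\big)\ge(\alpha\beta)^{r(k+2)}\rho$. Pushing this through the bi-Lipschitz estimate \equ{lip} exactly as in the displayed chain at the end of the previous proof yields, for every $x$ in the final ball, $|q\varphi^{-1}(x)-p|\ge q\cdot L^{-1}(\alpha\beta)^{r(k+2)}\rho\ge c\,q^{-1}$, i.e. $|\varphi^{-1}(x)-p/q|\ge c/q^2$, for all $p/q\in R_k$. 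Since every rational of sufficiently large denominator falls into some $R_k$ (and finitely many small-denominator rationals are handled by shrinking $c$, or are automatically far once the radius is small), the intersection point $x_\infty$ satisfies $\varphi^{-1}(x_\infty)\in\bold{BA}$, and $x_\infty\in K$ since all centers lie in $K$.

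I would organize the write-up as: (i) set up $\alpha,\beta,L$, the opening moves, and the block structure, choosing $N$ by an analogue of \equ{large N}; (ii) reduce to finitely many "bad" rationals per block by the spacing/counting estimate, making precise which rationals are killed in which block and why there are at most $N$ of them in the governing ball; (iii) invoke Lemma \ref{log turns} $r$ times per block to get the distance lower bound; (iv) convert via \equ{lip} to the Diophantine inequality and conclude $\varphi^{-1}(x_\infty)\in\bold{BA}$. The main obstacle is step (ii): unlike the orbit case where the bad points are literally $\varphi((y_n+m)/t_n)$ for $n$ in an index set of size $\le N$ \emph{by hypothesis}, here one must \emph{prove} that short intervals contain boundedly many rationals of bounded denominator, and one must align the denominator ranges $R_k$ with the geometric contraction rate $(\alpha\beta)^r$ so that (a) each relevant range has $\le N$ survivors in the ball and (b) every large-denominator rational is eventually caught; getting the bookkeeping of which block handles which denominators consistent with the "$r(k+2)-1$ versus $r(k+1)$" offset used to ensure the rationals are resolved at scale is the delicate part. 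This is precisely the argument of \cite{F}; we include it, as noted, for completeness and to cover the bi-Lipschitz (rather than merely affine) case, the only genuinely new ingredient being the two-sided bound \equ{lip} in place of an affine map's exact scaling.
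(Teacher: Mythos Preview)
Your approach has a genuine gap in step (ii), the counting. You set the denominator range for block $k$ to be $(\alpha\beta)^{-r(k-1)}\le q<(\alpha\beta)^{-rk}$ and claim that at most $N$ such rationals lie in a ball of diameter $\asymp(\alpha\beta)^{rk}$. But the minimum gap between distinct rationals with denominators $\le Q'\df(\alpha\beta)^{-rk}$ is only $\ge 1/(Q')^2=(\alpha\beta)^{2rk}$, so such a ball can contain on the order of $(\alpha\beta)^{rk}/(\alpha\beta)^{2rk}=(\alpha\beta)^{-rk}$ of them, a quantity that blows up with $k$. No choice of $N$ depending only on $\alpha,\beta$ can absorb this. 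The mismatch is structural: in Theorem~\ref{theorem} the ``bad'' points at stage $k$ are spaced $\gtrsim t_n^{-1}\asymp(\alpha\beta)^{rk}$ apart, \emph{linearly} matching the ball scale; for $\bold{BA}$ the spacing is $q^{-2}$, \emph{quadratic} in the denominator, so copying the linear bookkeeping from Theorem~\ref{theorem} cannot work.

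The paper's proof resolves this by synchronizing the denominator scale with the \emph{square root} of the contraction: set $R\df(\alpha\beta)^{-1/2}$ and, at the $k$th move, consider only $p/q$ with $R^{k-1}\le q<R^k$. Then distinct such rationals are $\ge R^{-2k}=(\alpha\beta)^k$ apart, and one arranges (by taking $\rho<\alpha\beta/(2L)$) that the diameter of $B(\omega_k)$ is strictly less than $L^{-1}(\alpha\beta)^k$, so $B(\omega_k)$ contains at most \emph{one} point of $\varphi(\{p/q:R^{k-1}\le q<R^k\})$. A single application of Lemma~\ref{log turns} with $N=1$ then suffices at each step---no block structure, no $r$, no offsets. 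The bi-Lipschitz conversion and the final inequality $|\varphi^{-1}(x)-p/q|>c/q^2$ with $c=R^2\alpha\rho/L$ go through exactly as you indicated. Your outline becomes correct once you replace the denominator ranges by $[R^{k-1},R^k)$ and drop the blocks; the resulting argument is both simpler and valid.
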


\begin{proof}
Again, take 
an arbitrary $0 < \beta < 1$,
and let  $L$ be as in \equ{lip}.
Let $R= (\alpha\beta)^{-\frac{1}{2}}$. 
The game begins with Bob choosing $(x_1,\rho^{\prime})\in \Omega$.
Let $k_0$ be the minimal positive integer satisfying 
\eq{rho}{(\alpha\beta)^{k_0-1}\rho^{\prime}<\min\left(\frac{\alpha\beta}{2L}, \rho_0\right)\,,} where
$\rho_0$ is as in Definition \ref{decay},
and denote $\rho \df(\alpha\beta)^{k_0-1}\rho {'}$.
Alice will play arbitrarily until her $k_0$th turn.
Then $\omega_{k_0}=(x_2,\rho)$ for some $x_2\in K$.
Reindexing, set $\omega_1=\omega_{k_0}$.
Let $c = \frac{R^2\alpha\rho}{L}$.

Fix an arbitrary $k\in\N$.
We will describe Alice's strategy for choosing $\omega_k'$
such that
\begin{equation}
\label{W_k2}
\left| \varphi^{-1}(x) - \frac{p}{q}\right| > \frac{c}{q^2}\text{ for all } x\in B(\omega'_k), R^{k-1} \le q < R^{k}.
\end{equation}
Clearly the existence of such strategy implies that she can play so that $\bigcap_k B(\omega'_k)$ lies in 
 $K\cap \varphi(\bold{BA})$. 
 
Note that for any distinct $\frac{p_1}{q_1}, \frac{p_2}{q_2}\in \R$
with $R^{k-1} \le q_1, q_2 < R^k$, 
$$
\left| \frac{p_1}{q_1}-\frac{p_2}{q_2}\right|
= \left| \frac{p_1q_2-p_2q_1}{q_1q_2}\right| > \frac{1}{R^{2k}}.
$$
Hence, $\left|\varphi\left(\frac{p_1}{q_1}\right) - \varphi\left(\frac{p_2}{q_2}\right) \right| 
\ge \frac{1}{L}R^{-2k}$. 
But
$$\text{diam}\big( B(\omega_{k}) \big)\le 2\rho(\alpha\beta)^{k-1} \under{\equ{rho}}< \frac{1}{L}R^{-2k}\,,$$
so $B(\omega_{k})$ contains at most one
point $\varphi\big(\frac{p}{q}\big)$ with $R^{k-1} \le q < R^k$.
In view of  Lemma \ref{log turns}, where we put $N = 1$, 
Alice can choose $\omega'_{k}\in \Omega$ such that, for every $x \in B(\omega'_{k})$
and $(p,q) \in \mathbb{Z}\times \N$ with $R^{k-1} \le q < R^k$,
one has
$$
\left| x - \varphi\left(\frac{p}{q}\right)\right| > \alpha\rho(\alpha\beta)^k
= \alpha\rho R^{-2k} > \frac{R^2\alpha\rho}{q^2}\,.
$$
Again by \equ{lip}, we obtain 
$$
\left|\varphi^{-1}(x) - \frac{p}{q}\right| 
> \frac{R^2\alpha\rho}{Lq^2} = \frac{c}{q^2}\,,
$$
and (\ref{W_k2}) is established.
\end{proof}

As an immediate consequence of 
Proposition \ref{dimbound} and
the countable intersection property 
of winning sets,
we obtain the following

\begin{cor}
\label{cor} 
Let $K$ be the support of a $(C,\gamma)$-absolutely decaying measure on $\R$,
and let $\alpha$ be as in \equ{alpha}. 
Then
given lacunary sequences $\ct_k$, sequences $\cy_k\in \T$,
bi-Lipschitz maps $\varphi_k,\psi_k: \R\to \R$, and an open set $U \subset \R$
with 
$U\cap K \ne\varnothing$, 
one has
$$\dim\left( \bigcap_{k = 1}^{\infty} K \cap U \cap \varphi_k(\bold{BA}) \cap \psi_k\big( \tilde E(\ct_k, \cy_k)\big)\right) \ge \gamma\,.$$
\end{cor}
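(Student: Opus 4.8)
The plan is to derive Corollary~\ref{cor} by combining the three main ingredients the paper has assembled: Theorem~\ref{theorem}, Theorem~\ref{ba}, and Proposition~\ref{dimbound}, together with Schmidt's countable intersection property and Corollary~\ref{minusone}. First I would fix $\alpha$ as in~\equ{alpha} and recall that $K$ is the support of a $(C,\gamma)$-absolutely decaying measure $\mu$ on $\R$. By Theorem~\ref{theorem}, each $\psi_k\big(\tilde E(\ct_k,\cy_k)\big)$ is $\alpha$-winning on $K$, and by Theorem~\ref{ba}, each $\varphi_k(\bold{BA})$ is $\alpha$-winning on $K$. Thus we have a countable family of subsets of $\R$, each of which meets $K$ in an $\alpha$-winning subset of the complete metric space $K$ (note $K$ is closed in $\R$, hence complete). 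Schmidt's theorem \cite[Theorem 2]{S1}, applied on the metric space $K$, gives that the intersection
$$S \df \bigcap_{k=1}^\infty \Big( \varphi_k(\bold{BA})\cap K\Big) \cap \bigcap_{k=1}^\infty \Big( \psi_k\big(\tilde E(\ct_k,\cy_k)\big)\cap K\Big)$$
is again $\alpha$-winning on $K$, hence in particular winning on $K$.

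Next I would handle the open set $U$. Since $\mu$ is absolutely decaying it is in particular Federer (Proposition~\ref{equiv-af}), so Proposition~\ref{dimbound} applies: for any open $U\subset \R$ with $\mu(U)>0$ one has $\dim(S\cap K\cap U)\ge \underline d_\mu(U)$. The hypothesis in the corollary is stated as $U\cap K\ne\varnothing$ rather than $\mu(U)>0$; but since $U$ is open and $K=\supp\mu$, any point of $U\cap K$ has a neighborhood inside $U$ of positive measure, so $\mu(U)>0$ automatically. Finally, the paper already showed (the computation of $\underline d_\mu(x)$ following Proposition~\ref{equiv-af}) that $\underline d_\mu(x)\ge \gamma$ for every $x\in\supp\mu$, hence $\underline d_\mu(U)\ge \gamma$ by the definition~\equ{dmu}. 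Combining, $\dim(S\cap K\cap U)\ge\gamma$, which is exactly the asserted bound since $S\cap K\cap U$ is the set in the statement.

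One small bookkeeping point deserves attention: Theorem~\ref{theorem} is stated for lacunary $\ct$, and in its proof the passage to a tail of $\ct$ (using Corollary~\ref{minusone}) is what allows the reduction to $t_n>1$; since Corollary~\ref{minusone} requires exactly the same $\alpha$ as in~\equ{alpha}, everything is consistent and the single value of $\alpha$ works simultaneously for all the countably many target sets. There is essentially no obstacle here — the corollary is genuinely a formal consequence of the machinery already in place. If anything, the only thing to be careful about is that all the winning statements are taken with respect to the \emph{same} metric space $K$ (so that Schmidt's intersection theorem applies verbatim), and that $K$ being closed in $\R$ is what makes it a complete metric space, which is the standing hypothesis for Schmidt's game in \S\ref{Schmidt}.

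In short: Theorems~\ref{theorem} and~\ref{ba} give $\alpha$-winning on $K$ for each factor; \cite[Theorem 2]{S1} on $K$ gives $\alpha$-winning for the countable intersection; $U\cap K\ne\varnothing$ upgrades to $\mu(U)>0$; and Proposition~\ref{dimbound} together with the bound $\underline d_\mu\ge\gamma$ yields the dimension estimate. I would write this up in well under a page.
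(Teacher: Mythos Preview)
Your proposal is correct and follows exactly the approach the paper indicates: the paper states the corollary as ``an immediate consequence of Proposition~\ref{dimbound} and the countable intersection property of winning sets,'' and you have spelled out precisely those ingredients (Theorems~\ref{theorem} and~\ref{ba} for $\alpha$-winning, Schmidt's intersection theorem, Proposition~\ref{equiv-af} for Federer, and the bound $\underline d_\mu\ge\gamma$). Your additional remarks on $U\cap K\ne\varnothing\Rightarrow\mu(U)>0$ and completeness of $K$ are correct clarifications that the paper leaves implicit.
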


In particular we can have $\gamma = \dim(K)$ when the measure satisfies a power law (e.g.\ 
when $K$ is equal to $\R$ or to the
middle third Cantor set).

\smallskip

We conclude the section with an application of Theorem \ref{theorem} to affine expanding maps $f_{b,c}$ as defined in \equ{deffbc}:

\begin{cor}
\label{affine}
Let $K$ be the support of a $(C,\gamma)$-absolutely decaying measure on $\R$,
and let $\alpha$ be as in \equ{alpha}. 
 Then
 for every bi-Lipschitz map $\varphi: \R \to \R$, 
$b\in\Z_{\ge 2}$ and $c,y\in \T$,  the set $\varphi\left(\pi^{-1}\big( E(f_{b,c}, y)\big)\right)$ is $\alpha$-winning on $ K$. 
\end{cor}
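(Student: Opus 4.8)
The plan is to reduce the statement to Theorem~\ref{theorem} by conjugating the affine map $f_{b,c}$ to the pure dilation $x\mapsto b^nx$ and absorbing the translation into the sequence of target points. Since $\ct=(b^n)$ is lacunary (its ratio is $b\ge 2$) and Theorem~\ref{theorem} already permits an arbitrary sequence $\cy$ of points in $\T$ and an arbitrary bi-Lipschitz $\varphi$, this reduction is all that will be needed.

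First I would compute the iterates of $f_{b,c}$. Fixing a lift $\tilde c\in\R$ of $c$, an immediate induction gives $f_{b,c}^n(\pi x)=\pi(b^nx+c_n)$ for every $x\in\R$ and $n\in\N$, where $c_n\df \tilde c\,(b^{n-1}+\dots+b+1)=\tilde c\,\tfrac{b^n-1}{b-1}$. Next, for a lift $\tilde y\in\R$ of $y$ and any $n$,
\[
d\big(f_{b,c}^n(\pi x),y\big)=\min_{k\in\Z}\big|b^nx+c_n-\tilde y-k\big|=\min_{k\in\Z}\big|b^nx-(\tilde y-c_n)-k\big|=d\big(\pi(b^nx),y_n\big),
\]
where $y_n\df\pi(\tilde y-c_n)\in\T$. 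Consequently $y\notin\overline{\{f_{b,c}^n(\pi x):n\in\N\}}$ if and only if $\inf_n d\big(\pi(b^nx),y_n\big)>0$, i.e.\ $x\in\tilde E(\ct,\cy)$ with $\ct\df(b^n)$ and $\cy\df(y_n)$. Thus $\pi^{-1}\big(E(f_{b,c},y)\big)=\tilde E(\ct,\cy)$, an honest equality of subsets of $\R$.

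Finally, since $\ct$ is lacunary and $\cy$ is a sequence of points in $\T$, Theorem~\ref{theorem} applies verbatim: for every bi-Lipschitz $\varphi:\R\to\R$, the set $\varphi\big(\tilde E(\ct,\cy)\big)=\varphi\big(\pi^{-1}(E(f_{b,c},y))\big)$ is $\alpha$-winning on $K$ for $\alpha$ as in \equ{alpha}. No new game-theoretic argument is required. The only mild subtlety, and the point I would be most careful to check, is the bookkeeping in the mod-$1$ identity above: that the successive translations by $c_n$ really turn the single target $y$ into a well-defined target sequence $(y_n)$ in $\T$, and that the indexing of $n\in\N$ used in \equ{deffbc} matches that used in \equ{defebut} and in the definition of $\tilde E(\ct,\cy)$.
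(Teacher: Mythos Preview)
Your proposal is correct and follows exactly the approach of the paper's own proof: both reduce to Theorem~\ref{theorem} by observing that the iterates $f_{b,c}^n$ differ from $x\mapsto b^nx$ by a translation, so the single target $y$ becomes a sequence $\cy=(y_n)$ and $\pi^{-1}\big(E(f_{b,c},y)\big)=\tilde E\big((b^n),\cy\big)$. The paper merely asserts that such a sequence is ``easy to construct''; you have written out the formula $y_n=\pi(\tilde y-c_n)$ with $c_n=\tilde c\,\frac{b^n-1}{b-1}$ explicitly, which is a welcome elaboration rather than a different argument.
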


\begin{proof} Since $f_{b,c}$ is a composition of $f_b$ with an isometry of $\T$, 
it is easy to construct a sequence 
of points $\cy = (y_n)$ of $\T$ such that, 
with $\ct = (b^n)$, 
one has 
$x\in \tilde E(\ct,\cy)$ if and only if $\pi(x)\in E(f_{b,c}, y)$.
\end{proof}

\section{Applications, related results and further questions}

\ignore{ 
\subsection{Digit expansions}
For a base $b \ge 2$ and a real number ${x}$, write
$$
{x} = \lfloor {x} \rfloor + \sum_{n \ge 1} \, a_n b^{-n},
$$
where the sequence $(a_n)_{n \ge 1}$ contains infinitely
many elements different from $b-1$.
Then, associate to ${x}$ the infinite word
${\bf a}_{{x}, b} = a_1 a_2 \ldots$ on the
alphabet $\{0, 1, \ldots , b-1\}$.

We say that an infinite word ${\bf w}$,
defined over a finite alphabet ${\mathcal A}$,
\comdima{Did we invent this definition or it
can be found somewhere else? need to explain this...}
is {\sl free of arbitrarily large powers\/}
if, for every finite block $a_1 \ldots a_k$ of digits from
${\mathcal A}$, there exists an integer $m$ such that
the block of $mk$ digits formed by the
concatenation of $m$ copies of $a_1 \ldots a_k$
is not contained in ${\bf w}$. 

It is easy to see that the word ${\bf a}_{{x}, b}$ is free of arbitrarily large powers
if and only if for any finite block $a_1 \ldots a_k$ of digits from $\{0, 1, \ldots , b-1\}$,
the point in $\T$ given by the {\it periodic\/} $b$-ary
expansion $0.\overline{a_1 \ldots a_k}$ is not a 
limit point of $\{\pi(b^nx) : n\in\N\}$.\comdima{Given this equivalence, I am not sure what is the point of this subsection, unless
being free from powers is a well studied concept.} 
In other words, ${\bf a}_{{x}, b}$ is free of arbitrarily large powers if and only if
$x\in\cap_{y\in\Q/\Z} \tilde E(b,y)$. Thus
our main theorem implies

\begin{cor}\comdima{Maybe strengthen the conclusion to being $\alpha$-winning on $K$?}
Given $K \subset \R$ supporting an absolutely decaying measure $\mu$,
the set of real numbers $x\in K$ that are badly approximable and 
such that, for every $b \ge 2$, the infinite word
${\bf a}_{{x}, b}$ is free of arbitrarily large powers, 
has positive Hausdorff dimension. In particular,
if $\mu$ satisfies a power law (for example, if $K$ is the middle third Cantor set), 
then the dimension
of this set is full.
\end{cor}
} 

\subsection{Trajectories avoiding intervals}  
Recently   a quantitative modification of Schmidt's proof of
abundance
of numbers normal to no base was introduced in the work of R.\ Akhunzhanov.  To describe it, let us define 
$$\hat E(b,A) = \bigcap_{y\in A} \tilde E(b,y) =  \big\{ x\in\R: A\cap \overline{\{\pi(b^n x) : n\in \N\}} = \varnothing
\big\} $$
for a subset $A$ of $\T$.
Clearly when $A = B(0,\delta)$ is a $\delta$-neighborhood of $0$ in $\T$, every number $x\in \hat E(b,A)$ has a uniform (depending on $\delta$) 
upper bound on the number of consecutive zeros in the $b$-ary expansion.
It is easy to see that whenever $A$ contains an interval,  $\hat E(b,A)$  is nowhere dense and has positive 
Hausdorff codimension. 
Nevertheless it was proved in \cite{A1, A2} that for any $\vre > 0$ 
and any integer $b\ge 2$ there exists a 
positive (explicitly constructed) $\delta = \delta_{b,\vre}$ such that the set
$$
\bigcap_{b \in\Z_{\ge 2}}\hat E\big(b,B(0,\delta_{b,\vre})\big)$$
has \hd\ at least $1-\vre$. The proof is based on Schmidt's game, namely on so-called {\sl $(\alpha,\beta,\rho)$-winning sets\/} of the game. This technique readily extends to playing on supports of absolutely decaying measures. Namely, one can show that given $C,\gamma,\vre > 0$ and integer $b\ge 2$,
there exists $\delta = \delta_{C,\gamma, b,\vre}$ such that 
$$
\dim\left(
\bigcap_{b \in\Z_{\ge 2}}K \cap \hat E\big(b,B(0,\delta_{C,\gamma,b,\vre})\big)\right) > \gamma - \vre$$
whenever $K$ supports a $(C,\gamma)$-absolutely decaying measure. Details will 
be described elsewhere.

\ignore{
In this section
we shall use the following weaker condition on a target set $Q$,
introduced in \cite{A2}.
Given $\alpha,\beta,\rho > 0$, an $(\alpha,\beta,\rho)$-game is an $(\alpha,\beta)$-game
under the additional restriction that the radius of $B(\omega_1)$ be $\rho$.
If Alice has a winning strategy, i.e., 
a strategy for winning the game regardless of how well Bob plays, 
we say that ${S}$  is an 
$(\alpha , \beta, \rho)$-winning set. Thus
${S}$ is $(\alpha, \beta)$-winning
if and only if it is $(\alpha,\beta,\rho)$-winning for every $\rho > 0$.\\

In \cite{A2}, Akhunzhanov showed that for any $\vre > 0$,
there is a constant $\kappa>0$ and 
a set of dimension greater than $1-\vre$, consisting of
real numbers $x$ such that 
$$\|b^n x\| \geq \exp\left(-\kappa b\left(\log b\right)^2\right)$$
for all integers $b \ge 2$ and $n \geq 0$, where $\|\cdot\|$ denotes the
distance to the nearest integer.

Using results from the previous sections, we obtain the following generalization.

\begin{thm}
\label{thm6}
Let $K$ be the support of a $(C,\gamma)$-absolutely decaying measure, 
$\varphi: \R \to \R$ bi-Lipschitz, and $y \in \Q/\Z$. 
Then for each $\vre > 0$
there exists $\kappa > 0$ and a set of dimension greater than $\gamma -\vre$
consisting of points $\varphi(x)$ with $x \in K$ and
$$\|b^nx-y\| \geq \exp(-\kappa b (\log b)^2)$$
for all integers $b\geq 2$ and $n\geq 0$.
\end{thm}

Before proving Theorem \ref{thm6}, we need to prove a lemma.
For $A\subset S^1$,
write $\tilde E(b,A) = \bigcap_{y\in A} \tilde E(b,y)$.

\begin{lem}
\label{interval}
For any $C, \gamma > 0$, 
there exists $\alpha = \alpha(C,\gamma) \in (0,1)$ such
that if $K$ is the support of a $(C,\gamma)$-absolutely decaying measure, then
for every bi-Lipschitz map $\varphi: \R \to \R$, integer $b\geq 2$, and
$y \in \Q$, there exists $\rho_1 = \rho_1(y) > 0$ such that for all
$0 < \beta < 1$ and $0 < \rho \leq \rho_1$, 
$\varphi\big(\tilde E(b,I)\big)$ is $(\alpha, \beta,\rho)$-winning on $K$,
where $I = B\left(y,\frac{\alpha^2\beta\rho}{L}\right)$.
\end{lem}

\comliorryan{Note that $\rho_1$ depends on $y$, and will tend
to zero as we choose rational $y'$s closer and closer to an arbitrary $y\in\R$,
so it seems the lemma doesn't hold for all $y\in\R$.}

\begin{proof}
Let $\alpha$ be as in Lemma \ref{log turns}
and $y = \frac{p}{q} \in \Q$. Define
\begin{equation}
\label{rho_1}
\rho_1 = \min\left(\frac{1}{2qL}, \rho_0\right),
\end{equation}
where
$\rho_0$ is as in Definition \ref{decay},
and let $0 < \rho \leq \rho_1$.
Let $0 <\beta < 1$ and $b \in \Z_{\geq 2}$.
Fix $k\in\N$ and let
\begin{center}
$Y = \{ \frac{y+m}{b^j} : (\alpha\beta)^{-(k-1)} \le b^j < (\alpha\beta)^{-k}, m \in \Z\}$.
\end{center}
We claim that 
$B(\omega_{k})$ contains at most one element of $\varphi(Y)$.
If $y_i =b^{-j_i}(y+m_i)$ for some $m_i\in\Z$
and $(\alpha\beta)^{-(k-1)} \le b^{j_1} \leq b^{j_2} < (\alpha\beta)^{-k}$, then
$$|y_1-y_2| 
=b^{-j_2}|b^{j_2-j_1}(y+m_1)-(y+m_2)| \geq b^{-j_2}\frac{1}{q} > \frac{(\alpha\beta)^k}{q}.$$
Thus, by the bi-Lipschitz condition, the distance between any two distinct elements
of $\varphi(Y)$ is greater than $\frac{(\alpha\beta)^{k}}{qL} \ge 2\rho (\alpha\beta)^{k}$,
so $B(\omega_{k+1})$ contains at most one point in $\varphi(Y)$.
By Lemma \ref{log turns}, Alice can choose $\omega'_{k+1}$ such that,
for any $x \in B(\omega'_{k+1})$
and any $y^{\prime}\in \varphi(Y)$, we have
$|x-y^{\prime}| > \alpha(\alpha\beta)^k\rho$, so again by the bi-Lipschitz condition
we have, for each $y^{\prime} \in Y$, $|\varphi^{-1}(x)-y^{\prime}| 
> \frac{\alpha(\alpha\beta)^k\rho}{L}$.
Since $\frac{y+m}{b^j}\in Y$ for any $m \in \mathbb{Z}$,

\begin{center}
$|b^j\varphi^{-1}(x)-y-m| 
=b^j|\varphi^{-1}(x)-\frac{y+m}{b^j}|
> \frac{1}{L}\alpha\rho (\alpha\beta)^{k}b^j \ge 
	\frac{1}{L}\alpha\rho (\alpha\beta)^{k}(\alpha\beta)^{-(k-1)} = \frac{\alpha^2\beta\rho}{L}$.
\end{center}

By induction, $\bigcap_k B_K(\omega'_k) \in \varphi\big(\tilde E(b,I)\big)$, so Alice will win.

\end{proof}

In what follows we shall need the following lemma which is in fact a direct
consequence of the proof of Theorem 2 in \cite{S1}.

\begin{lem}
\label{partition}
Let $\sqcup_{j=1}^\infty P_j$ be a partition of $\mathbb{N}$ into arithmetic sequences $P_j$
with first term $m_j$ and common differences $d_j$.
 Given $0 < \alpha \leq 1/2$, 
$0 < \beta < 1$, and $\rho > 0$, 
let
$\beta_j = \beta(\alpha\beta)^{d_j-1}$, and $\rho_j = \rho(\alpha\beta)^{m_j-1}$. Then,
if $S_j$ is $(\alpha,\beta_j,\rho_j)$-winning, $\cap_{j=1}^\infty S_j$ is $(\alpha,\beta,\rho)$-winning.
\\
\end{lem}
\begin{proof}
We will describe an $(\alpha,\beta,\rho)$-winning strategy for $\cap_{j=1}^\infty S_j$.
Note that the radius of $B_{m_j+(n-1)d_j}$ is 
$(\alpha\beta_j)^{n-1}\rho_j$ for $n \in\mathbb{N}$.
Alice will play according to the $(\alpha,\beta_j,\rho_j)$-winning strategy for $S_j$ on his
$m_j, m_j + d_j, m_j+2d_j,\dots$ turns.
Then, for each $j\in \N$, $\cap_{i=1}^\infty B_K(\omega'_i) \in S_j$, so the lemma follows.
\end{proof}

 The last piece of the puzzle before proving Theorem \ref{thm6}
 is the following lemma (Lemma 5 in \cite{A2}).

\begin{lem}
\label{progressions}
The set of natural numbers $\mathbb{N}$ can be represented as the disjoint union
$\mathbb{N} = \sqcup_{b=2}^\infty P_b$ of arithmetic progressions $P_b$ with
first terms $m_b$ and common differences $d_b$ such that, for some $\kappa_1 > 0$,
\eq{prog eq}{m_b \leq d_b \leq \kappa_1 b\left(\log b\right)^2.}
\end{lem}

\vspace{.12in}

\begin{proof}[Proof of Theorem \ref{thm6}]
Let $P_b$, $m_b$, and $d_b$ be as in Lemma \ref{progressions},
let $\alpha$ and $\rho_1$ be as in Lemma \ref{interval}, and let $0 < \beta < 1$.
Define $\beta_b = \beta \left(\alpha\beta\right)^{d_b - 1}$,
$\rho_b = \rho_1(\alpha\beta)^{m_b - 1}$,
$\vre_b = \frac{\alpha^2\beta_b\rho_b}{L}$, and $I_b = B(y,\vre_b)$.
Then by Lemma \ref{interval}, $\varphi\big(\tilde E(b,I_b)\big)$ is $(\alpha,\beta_b, \rho_b)$-winning.
It follows from Lemma \ref{partition} that $\cap_{b\ge 2} \varphi(\tilde E\big(b,I_b)\big)$ is
$(\alpha,\beta,\rho_1)$-winning.
Furthermore, by \equ{prog eq}, $d_b \geq m_b$ for every $b \geq 2$, 
and thus $\rho_b \geq \rho_1\beta_b$. Hence,
$$\vre_b \geq \frac{\alpha^2\rho_1}{L} \beta_b^2 
= \frac{\alpha^2\rho_1}{L}\beta^2\left(\alpha^2\beta^2\right)^{d_b-1}
= \frac{\rho_1}{L}\exp\left(d_b \log (\alpha^2\beta^2)  \right).$$
By \equ{prog eq}, there exists $\kappa > 0$ such that 
for each $x \in \varphi(\tilde E\big(b,I_b)\big)$,
$\|b^nx-y\|\geq \exp\left(-\kappa b(\log b)^2 \right)$. 
Given $\epsilon > 0$ there exists $\beta > 0$ such that the dimension of an
$(\alpha,\beta,\rho)$-winning set is greater than 
$\underline{d}_{\mu}(K) - \epsilon \geq \gamma - \epsilon$,
so the theorem follows.
\end{proof}

As an immediate corollary, taking the union of these 
sets over all $\vre = \frac1n$, we get

\begin{cor}
Let $K$ be the support of a $(C,\gamma)$-absolutely decaying measure, 
$\varphi: \R \to \R$ bi-Lipschitz, and $y \in \Q/\Z$. 
Then the set of points $\varphi(x)$ with $x \in K$ 
such that there exists $\kappa > 0$ with
$$\|b^nx-y\| \geq \exp(-\kappa b (\log b)^2)$$ for $b\geq 2$ and $n\geq 1$,
has dimension at least $\gamma$.
\end{cor} }

\subsection{Are these sets null?}
 It is not hard to construct examples
of  absolutely decaying measures $\mu$  such that 
$K = \supp\,\mu$ lies entirely
inside a set of the form $\tilde E(b,y)$ for some $b\in\Z_{\ge 2}$, or inside
the set of \ba\ numbers. However in many cases, under some additional assumptions on $\mu$ 
one can show that those sets, proved to be winning on $K$ in the present paper,
have measure zero. For example, it is proved in \cite{C} that almost all $x$
in the middle third Cantor set,  
with respect to the coin-flipping measure, are normal to base $b$ whenever
$b$ is not a power of $3$. And in a recent work \cite{EFS} of M.\ Einsiedler, U.\ Shapira and 
the third-named author it is established that $\mu({\bf BA}) = 0$ whenever $\mu$ is
$f_b$-invariant for some  $b\in\Z_{\ge 2}$ and has positive dimension. It seems interesting to
ask for general conditions on a measure on $\R$,  possibly stated in terms of invariance
under some dynamical system, which guarantee that whenever $y\in \T$, 
sets  $\tilde E(b,y)$ for a fixed $b \ge 2$  
 have measure zero.

\subsection{Strong winning sets} In a recent preprint \cite{Mc} C.\ McMullen introduced a
 modification of Schmidt's game, where
condition \equ{balls} is replaced by \eq{strongballs}{\rho_k' \geq \alpha \rho_k\text{ and }\rho_{k+1} \geq \beta \rho_k'\,,}
and $S\subset X$
is said to be {\sl $(\alpha,\beta)$-strong winning\/} if Alice
 has a winning strategy in the game deÞned by  \equ{strongballs}.
Analogously, we define $\alpha$-strong winning and strong winning sets.  
It is straightforward to verify that
$(\alpha,\beta)$-strong winning implies $(\alpha,\beta)$-winning, and that a countable intersection of $\alpha$-strong winning sets is $\alpha$-strong winning. Furthermore, this class has stronger invariance properties,
e.g.\ it is proved in \cite{Mc} that strong winning subsets of $\R^n$ are preserved by quasisymmetric 
homeomorphisms. 
McMullen notes that many examples of winning sets arising naturally in dynamics and Diophantine approximation seem to  also be strong 
winning. The sets considered in this paper are no exception: it is not hard to modify
our proofs to show that, under the assumptions of Theorems \ref{theorem} and \ref{ba},
the sets $\tilde E(\ct,\cy)$ and $\bf BA$ are $\alpha$-strong winning on $K$.

\subsection{More general self-maps of $\T$}\label{nonlinear} It would be interesting to unify Theorems \ref{theorem}
and \ref{ba} by describing a class of maps $f:\T\to\T$
 for which one can prove sets of the form
$E(f,y)$ to be winning on $K$ whenever $K\subset \T$ supports an absolutely decaying measure. 
An important special case is a map $f$ given by multiplication by $b$ when $b > 1$ is not an integer;
that is, constructed by identifying $\T$ with $[0,1)$ and defining $f(x) = bx \mod 1$. With this definition,
the set \equ{defebut} does not coincide with the $\pi$-preimage of $E(f,y)$, and the methods
of the present paper do not seem to yield any information. 
Some results along these lines have been obtained recently in \cite{Fae, FPS}.

\subsection{A generalization to higher dimensions}\label{matrices} 
The method developed in the present paper has been  extended in \cite{BFK}  to a multi-dimensional 
set-up, that is, with a lacunary sequence of real numbers acting on $\R$
replaced by a sequence of $m\times n$ matrices, whose operator norms form a lacunary sequence,
acting on $\R^n$. This, among other things, generalizes a result of Dani \cite{D}
on orbits of toral endomorphisms.  A higher-dimensional analogue of Theorem \ref{theorem} can
be established for absolutely decaying measures on $\R^n$.
Note that the  definition of absolutely decaying measures on $\R^n$ \cite{KLW}
is the same as Definition \ref{decay} but with balls $B(y,\vre\rho)$ being replaced by $\vre\rho$-neighborhoods
of affine hyperplanes. Also,   Proposition \ref{equiv-af} does not extend to $n > 1$, that is, absolute decay
does not imply Federer, and a combination of 
efd and Federer does not imply absolute decay. 

\bibliographystyle{alpha}

\end{document}